\documentclass[12pt,reqno]{amsart}
\usepackage{fullpage}
\usepackage{amsmath,amssymb,amsfonts,amsthm}

\usepackage{amsthm}
\tolerance=1200
\usepackage{tikz}
\usepackage{enumitem}
\usepackage{graphicx}
\usepackage{pgfplots}
\usetikzlibrary{arrows}
\usepackage[colorinlistoftodos]{todonotes}
\usepackage[colorlinks=true, allcolors=blue]{hyperref}
\DeclareMathOperator{\dist}{dist}
\newtheorem{theorem}{Theorem}

\newtheorem{lemma}[theorem]{Lemma}

\newtheorem{remark}{Remark}
\pgfplotsset{compat=1.18} 
\begin{document}
\title[A One-Dimensional Planar Besicovitch-Type set]{A One-Dimensional Planar Besicovitch-Type set }
\author{Iqra Altaf}
\address{Department of Mathematics, The University of Chicago, 5734 S. University Avenue, Chicago, IL 60637, USA. email:
iqra@uchicago.edu.}
\keywords{}
\date{\today} 
\begin{abstract} A $\Gamma$-Besicovitch set is a set which contains a rotated copy of $\Gamma$ in every direction. Our main result is the construction of a non-trivial $1$-rectifiable set $\Gamma$ in the plane, for which there exists a 1-dimensional $\Gamma$-Besicovitch set. 
\end{abstract} 
\maketitle 
\tableofcontents
\section{Introduction}
The null sets in $\mathbb{R}^{n}$ which contain a unit line segment in every direction are called Besicovitch sets or Kakeya sets. In the plane it is well-known that every Besicovitch set has Hausdorff dimension $2$. For $n > 2$, it is conjectured that every Besicovitch set in $\mathbb{R}^{n}$ must have full Hausdorff dimension.\par 
It is an interesting question to ask what happens if the line segment is replaced by other rectifiable sets. In \cite{CC}, it is shown that for any rectifiable planar set $\Gamma$ there exists a set of measure zero which contains a rotated copy of a full $\mathcal{H}^1$-measure subset of $\Gamma$ in each direction. This raises the question what the Hausdorff dimension of such sets can be. \par 
A $\Gamma$-Besicovitch set is a set which contains a rotated copy of $\Gamma$ in every direction.
There are some trivial examples of rectifiable sets for which there exist $\Gamma$-Besicovitch sets of Hausdorff dimension less than 2. For example if $\Gamma$ is a circular arc then the complete circle containing $\Gamma$ is a $\Gamma$-Besicovitch set. If a rectifiable set $\Gamma$ is covered by a concentric union of circles $\mathcal{C}$ then $\mathcal{C}$ is also a $\Gamma$-Besicovitch set. Such a set $ \mathcal{C}$ can have any dimension between 1 and 2. More interestingly, it has also been mentioned in \cite{CC} that if $\Gamma$ is an arbitrary countable union of circles, then there exists a $\Gamma$-Besicovitch set of Hausdorff dimension $1$. The circles do not have to be concentric. We also know that if $\Gamma$ is a $C^3$ curve and not a circular arc then the $\Gamma$-Besicovitch set has dimension $2$, see \cite{PYZ} and \cite{Z}. In both of these papers, the curvature of $\Gamma$ plays a central role in the proof of this statement.\par
 In \cite{IMK}, the authors consider Besicovitch sets of Cantor graphs built on a special class of Cantor sets. These Cantor sets do not have to be self-similar but have a certain degree of symmetry. (For details, see \cite{IMK}). The Cantor graphs are rectifiable sets that resembles the devil's staircase graph. The authors show that for such a Cantor graph $\Gamma$ built on a Cantor set $C$, every $\Gamma$-Besicovitch set has Hausdorff dimension greater than $\min\left(2-s^2,\frac{1}{s}\right)$, where $s=\dim C$.  \par 
 It was a well known open problem asked by Marianna Cs\"ornyei whether there exists a non-trivial $\Gamma$-Besicovitch set of dimension less than 2. Until now, the only known $\Gamma$-Besicovitch sets of dimension less than 2 were sets covered by lower dimensional union of circles. The construction of measure zero sets by Cs\"ornyei and Chang in \cite{CC} depended upon the tangent field of $\Gamma$. It is well-known that every $1$-rectifiable set in the plane has a tangent field which is defined almost everywhere. For a circular arc all the tangent lines have different directions and for a straight line, all the tangent lines are parallel. This suggested that it is interesting to look at the $\Gamma$-Besicovitch sets for rectifiable sets with the same tangent field as that of a line i.e. rectifiable sets whose tangents are parallel almost everywhere. \par
  Marianna Cs\"ornyei asked, in particular, does there exist a $1$-rectifiable set with parallel tangents almost everywhere for which there exists a $\Gamma$-Besicovitch set of dimension less than 2?  Apart from being an interesting question in itself, it may help us in understanding the (usual) Besicovitch set. In this paper we are answering this question affirmatively. Our main theorem is the following.
\begin{theorem} \label{main} There exists a $1$-rectifiable set $\Gamma$ with the following properties. 
\begin{enumerate} [label=(\roman*)]
\item The 1-dimensional Hausdorff measure of $\Gamma$ is positive.
    \item There exists a 1-dimensional $\Gamma$-Besicovitch set.
    
    \item The set $\Gamma$ is the graph of a monotone function whose domain is a Cantor set of measure $0$ and thus the tangents of $\Gamma$ are vertical almost everywhere.
\end{enumerate}
For any $s \in [0,1]$, we can construct the set $\Gamma$ such that the Cantor set in (iii) has Hausdorff dimension $s$.
\end{theorem}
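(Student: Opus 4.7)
The plan is to build $\Gamma$ and the desired $\Gamma$-Besicovitch set $B$ through a coordinated iterative construction.

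First I would fix a self-similar Cantor set $C \subset [0,1]$ of Hausdorff dimension $s$, with removal ratios chosen appropriately, and define $f : C \to [0,1]$ to be the natural monotone surjection associated with $C$ (essentially the restriction of the corresponding Cantor--Lebesgue function). Setting $\Gamma := \{(x, f(x)) : x \in C\}$, properties (i) and (iii) should follow directly from the structure. Indeed, the projection of $\Gamma$ onto the $y$-axis is $[0,1]$ and is $1$-Lipschitz, so $\mathcal{H}^1(\Gamma) \ge 1$. The map $y \mapsto (f^{-1}(y), y)$ is defined away from countably many $y$ and is monotone of bounded variation; after reparametrization by arc length it gives a Lipschitz map covering $\Gamma$ up to a countable set, so $\Gamma$ is $1$-rectifiable. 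Since $f^{-1}$ takes values in the Lebesgue-null set $C$, the monotone function $f^{-1}$ has zero derivative a.e., so this parametrization has vertical tangent a.e.

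For (ii), the serious work, I would use an iterative multi-scale construction in the spirit of Perron-tree / Nikodym-type arguments. At scale $n$, approximate $\Gamma$ by a finite union of $N_n$ tall, thin rectangles $R_{n,i}$ coming from the $n$-th stage of the Cantor construction. Pick a $1/M_n$-dense set of angles $\theta_1,\ldots,\theta_{M_n}$ in $[0,\pi)$, and for each $\theta_j$ choose translations of the rotated rectangles $R_{\theta_j}(R_{n,i})$ so that the overall union at stage $n$, call it $B_n$, has small $2^{-n}$-covering number. The combinatorial heart of the argument is to exploit the gaps in the Cantor set: pieces of rotated copies $R_{\theta_j}(\Gamma_n)$ and $R_{\theta_k}(\Gamma_n)$ can be translated so that they share most of their support, because each rotated piece fits into a nearly-$1$-dimensional ``skeleton'' (a union of short arcs), and these skeletons can be arranged in a tree so that $B_n$ admits a cover by about $2^n$ balls of radius $2^{-n}$, up to polylogarithmic factors.

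The set $B$ is then a suitable Hausdorff limit (or $\limsup$) of the $B_n$'s. A compactness argument applied to the translate-choices yields, for each $\theta$, a translate $v_\theta$ with $R_\theta(\Gamma) + v_\theta \subset B$, while the covering-number bound gives $\dim_H B \le 1$. I expect the main obstacle to be this combinatorial packing step, in which the Cantor structure of $\Gamma$ must do double duty: providing both the gaps that make overlap between rotated copies possible, and the graph rigidity (thanks to monotonicity of $f$) that forces each rotated copy of $\Gamma$ to lie as a graph in its own rotated coordinate system. It is precisely this combination that should make a truly $1$-dimensional construction possible, rather than the usual $2$-dimensional Kakeya answer.
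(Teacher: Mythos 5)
Your treatment of (i) and (iii) is essentially fine, but the overall strategy for (ii) has a fatal flaw at the very first step: you fix $\Gamma$ in advance to be the graph of the standard Cantor--Lebesgue-type function over a self-similar Cantor set, and only afterwards try to build a one-dimensional Besicovitch set for it. For such symmetric Cantor graphs this is provably impossible when $0<s<1$: as recalled in the introduction, by the result of \cite{IMK} every $\Gamma$-Besicovitch set for a Cantor graph built on such a Cantor set has Hausdorff dimension at least $\min(2-s^2,\tfrac1s)>1$. So no packing argument, however clever, can complete your step for (ii). The existential quantifier over $\Gamma$ in the theorem is essential: $\Gamma$ must be co-designed with the Besicovitch set, and the paper's $\Gamma$ is deliberately \emph{not} a standard devil's-staircase graph.

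Relatedly, the ``combinatorial heart'' of your argument --- that rotated copies can be translated to share most of their support because of the gaps in the Cantor set --- is precisely the step you leave unproven, and the mechanism you gesture at (skeletons arranged in a tree) is not the one that works. In the paper, the sub-rectangles of each rectangle $Q_{n,j}$ are placed with their corners $p_{n,j,k}$ on a circular arc of carefully calibrated angle $\theta_{n+1}=c\Delta_{n+1}\delta_n$ centred at a point $\alpha_{n+1}$ below the $x$-axis. This yields the identity $p_{n,j,k+l}=e^{-il\theta_{n+1}}p_{n,j,k}+p_{n,1,l+1}$, so rotating $\Gamma$ by $-l\theta_{n+1}$ and translating by $p_{n,1,l+1}$ carries the stage-$(n+1)$ structure almost onto itself; consequently the union of the $N_n\approx \Delta_n\Delta_{n+1}^{-1}$ rotated copies still has a $\theta_{n+1}$-neighbourhood of measure $\lesssim\theta_{n+1}$, which is what drives the dimension-one bound. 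This approximate rotational self-similarity is absent from a self-similar Cantor graph, whose sub-cells sit over vertical fibres rather than along arcs. Without identifying such a mechanism --- and with a choice of $\Gamma$ for which none can exist --- the proposal does not establish (ii).
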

\begin{remark}  \label{remareas}As the set $\Gamma$ has positive length and parallel tangents almost everywhere, it is straightforward to see that it cannot be covered by a union of concentric circles whose dimension is less than 2. 
\end{remark} 
\noindent \textbf{Notation} For $ d \in \{1,\,2\}$ and a set $E \subset \mathbb{R}^{d}, \, |E|$ denotes the $d$-dimensional Lebesgue measure of $E$. For $z \in \mathbb{R}^2$, $P_{x}(z)$ and $P_{y}(z)$ denote the $x$ and the $y$ coordinates of $z$ respectively, and $\|z\|$ denotes the Euclidean norm of $z$. For simplicity of notation we identify the complex plane $\mathbb{C}$ with $\mathbb{R}^2$, i.e. $(x,y) \cong x+iy$. For every $a, b \in \mathbb{R}$, $a \lesssim b$ implies that there is an absolute constant $C$ such that $a \leq Cb$, and $a \simeq b$ implies that $a \lesssim b$ and $b \lesssim a$. \\ Throughout the paper, dimension refers to Hausdorff dimension. 
\section{Construction of $\Gamma$} \label{cavt}
We will construct the 1-rectifiable set $\Gamma$ mentioned in Theorem \ref{main} as the countable intersection of the sets $\{\mathcal{S}_n\}_{n \in \mathbb{N}}$. The sets $\mathcal{S}_n$ will be constructed inductively such that  they are nested, i.e. $\mathcal{S}_{n+1} \subset \mathcal{S}_n$. For each $n \in \mathbb{N}$,  $\mathcal{S}_n$ will be a union of finitely many rectangles with sides parallel to the coordinate axes. We will introduce two positive sequences $\{\delta_{n}\}_{n \in \mathbb{N}}$ and $\{\Delta_{n}\}_{n \in \mathbb{N}}$ such that for each rectangle in $\mathcal{S}_n$, its $x$-projection has length $\delta_n$ and its $y$-projection has length close to $\Delta_{n}$. \par  

Let $c<1$ be a small constant to be specified later. Let $\{\delta_{n}\}_{n \in \mathbb{N}}$ and $\{\Delta_{n}\}_{n \in \mathbb{N}}$ be positive decreasing sequences converging to $0$ with the following properties.
\begin{align}
& \delta_1=\Delta_1=1. \hspace*{ 12 cm} \label{P1} \\
&\Delta_{n+1} \leq c\delta_{n}^{n+1}. \label{P2}\\
&\delta_{n+1} \leq c\Delta_{n+1} \delta_{n}. \label{P3} \\ 
& \Delta_2^{-1},  \hspace*{.0612 cm} 
\Delta_{n+1}^{-1} 
 \delta_{n}^{-1}\Delta_{n}\delta_{n-1} \in \mathbb{N}, \text{ for all }n \geq 2.   \label{P4}
\end{align} 
We see from \eqref{P3} that $\delta_{n}\Delta_{n}^{-1}\rightarrow 0$. This means the rectangles in $\mathcal{S}_n$ get narrower, in the sense that the ratio of the length of the side parallel to the $x$-axis to the length of the side parallel to the $y$-axis will go to $0$ as $n \rightarrow \infty$. \par 
We define $\theta_1:=c$ and $\theta_{n+1}:=c\Delta_{n+1}\delta_{n}$, for all $n \geq 1$. The property \eqref{P4} above implies that 
\begin{equation} \label{pe}
    \theta_{n+1}^{-1} \theta_{n} \in \mathbb{N}, \text{ for all }n \geq 1.
\end{equation}
\noindent For our construction of $\Gamma$, we will later need to choose points $\alpha_{n+1}$ and $q_{n+1}$ on a circular arc $C_{n}$ such that $\alpha_{n+1}$ is below the $x$-axis and is the center of $C_n$, and the following hold. (See Figure 1)
\begin{enumerate} [label=(\roman*)]
    \item The arc $C_n$ lies in the first quadrant of $\mathbb{R}^2$ and joins the points (0,0) and $(\delta_n,\Delta_n)$.
    \item The point $q_{n+1}$ is the intersection of $C_n$ with the line $y=\Delta_{n+1}$.
    \item The sub-arc of $C_n$ joining (0,0) and $q_{n+1}$ has angle $\theta_{n+1} = c\Delta_{n+1}\delta_{n}$.
\end{enumerate}
\hspace* {3cm} \\
   \begin{figure}[ht]
    \centering
    \includegraphics[width=\linewidth]{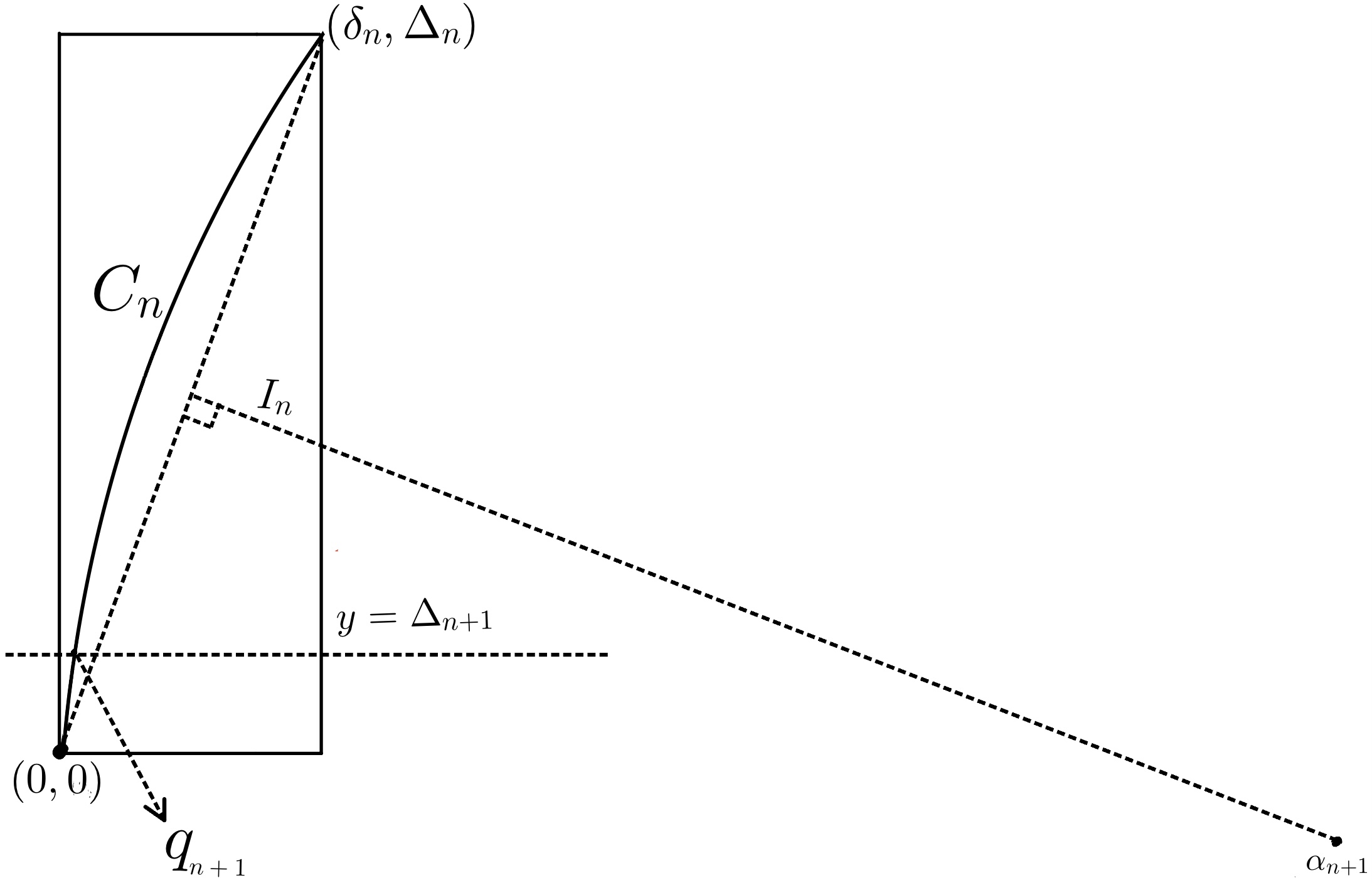}
    \caption{ }
    \label{figu}
\end{figure}
To see why we can choose such $C_n$, $\alpha_{n+1}$ and $q
_{n+1}$, let $\mathcal{C}$ be any circular arc in the first quadrant, joining (0,0) and $(\delta_n,\Delta_n)$. Let $q$ be the intersection of $\mathcal{C}$ with the line $y=\Delta_{n+1}$ and let $\theta$ be the angle of the arc joining $(0,0)$ and $q$. For any circular arc to pass through the points $(0,0)$ and $(\delta_n,\Delta_{n})$, its center must lie on the perpendicular bisector of the line segment joining these two points, call it $I_{n}$. Let $\alpha \in I_{n}$ be the center of the circular arc $\mathcal{C}$.  If $\lVert \alpha \rVert$  $\rightarrow \infty$, the arc $\mathcal{C}$ converges to the line segment joining $(0,0)$ and $(\delta_{n}, \Delta_{n})$ and the angle of $\mathcal{C}$ converges to 0. 
An elementary calculation shows that if $\alpha $ lies on the $x$-axis, then $\alpha=(\frac{\delta_n^2+\Delta_n^2}{2\delta_n},0)$. Since the radius of $\mathcal{C}$ is $\lVert \alpha \rVert $, the length of the arc joining (0,0) and $q$ is $\lVert \alpha \rVert \theta$. Using $\delta_{n} \leq \Delta_{n}$ we have
    \begin{align*}
    \Delta_{n+1} \leq \lVert q \rVert & \leq \lVert \alpha \rVert \theta =\Big( \frac{\delta_{n}^2+\Delta_{n}^2}{2\delta_{n}}\Big)\theta \leq \frac{\Delta_{n}^2}{\delta_{n}}\theta.
    \end{align*}
 Thus in this case $\theta \geq \Delta_{n+1}\Delta_n^{-2}\delta_{n}.$   \par 
 Since  $\Delta_{n} \leq 1$ and $c<1$ we have $0<c\Delta_{n+1}\delta_{n} \leq \Delta_{n+1}\Delta_n^{-2}\delta_{n}$. So we can choose $\alpha_{n+1} \in I_{n}$ below the $x$ axis such that $\theta= \theta_{n+1}=c\Delta_{n+1}\delta_{n}$. We denote the corresponding $\mathcal{C}$ and $q$ by $C_n$ and $q_{n+1}$. (See Figure 1). 

We are now ready to construct $\Gamma$. We start the inductive process by defining $\mathcal{S}_{1}:= Q_{1,1}:=[0,1]\times [0,1]$. Assume we have constructed $\mathcal{S}_n=\bigcup\limits_{j}Q_{n,j}$, where the union is over finitely many axis parallel rectangles $Q_{n,j}$, whose horizontal sides are disjoint and vertical sides are non-overlapping.
We denote the bottom-left corner of $Q_{n,j}$ as $p_{n,j}$. We also assume that $Q_{n,j}$ satisfies the following properties. \begin{itemize} 
\item[] \leavevmode\vspace*{-\dimexpr\abovedisplayskip + \baselineskip}\begin{align}\label{Bla}
    P_{x}(p_{n,j}) < P_{x}(p_{n,j+1}) \text{ and } P_{y}(p_{n,j}) < P_{y}(p_{n,j+1})
\end{align} 
\item[] \leavevmode\vspace*{-\dimexpr\abovedisplayskip + \baselineskip}\begin{align} \label{A}
Q_{n,1}=[0,\delta_n]\times [0,\Delta_n].
\end{align} 
\item[] \leavevmode\vspace*{-\dimexpr\abovedisplayskip + \baselineskip}\begin{align} \label{B}
|P_{x}(Q_{n,j})|=\delta_{n}.
\end{align}
\end{itemize}
    \noindent For each $j$, we will construct finitely many rectangles $Q_{n,j,k}$ contained in $Q_{n,j}$. We first define the bottom-left corners of the rectangles $Q_{n,j, k}$ which we call $p_{n,j, k}$. For $k \leq 2\pi \theta_{n+1}^{-1}$, let the point $p_{n,j,k}$  be
  \begin{equation} \label{aliph}
     p_{n,j,k}=\alpha_{n+1}(1-e^{-i (k-1)\theta_{n+1}})+ e^{-i (k-1)\theta_{n+1}}p_{n,j}.  
\end{equation} 
Putting $k=1$ in the above equation, we see $p_{n,j,1}=p_{n,j}$. From \eqref{A}, we see that $p_{n,1}=(0,0).$ This means that for $j=1$ we have 
\begin{equation} \label{aliphu} p_{n,1,k}=\alpha_{n+1}(1-e^{-i (k-1)\theta_{n+1}}).\end{equation} 
Thus the point $p_{n,1,k}$ is the image of $(0,0)$ under rotation by $-(k-1)\theta_{n+1}$ around the point $\alpha_{n+1}$ (see Figure 2) and $p_{n,1,2}=q_{n+1}$. 
 From \eqref{aliph} we also get
\begin{align} \label{zo}
     p_{n,j,k+1}-p_{n,j,k}& =\alpha_{n+1}(e^{-i (k-1)\theta_{n+1}}-e^{-i k\theta_{n+1}})+ \big(e^{-i k\theta_{n+1}}-e^{-i (k-1)\theta_{n+1}}\big)p_{n,j} \nonumber \\
     & = e^{-i(k-1)\theta_{n+1}} (1-e^{-i\theta_{n+1}})( \alpha_{n+1}-p_{n,j}). 
\end{align}
Thus we have 
\begin{equation*} 
|p_{n,j,k+1}-p_{n,j,k}|=|(1-e^{-i\theta_{n+1}})( \alpha_{n+1}-p_{n,j})|.
\end{equation*} 
Therefore for any fixed $j$, the points $p_{n,j,k}$  are equidistant, as the modulus of their difference is independent of $k$.
 \begin{figure}[ht]
    \centering
    \includegraphics[width=\linewidth]{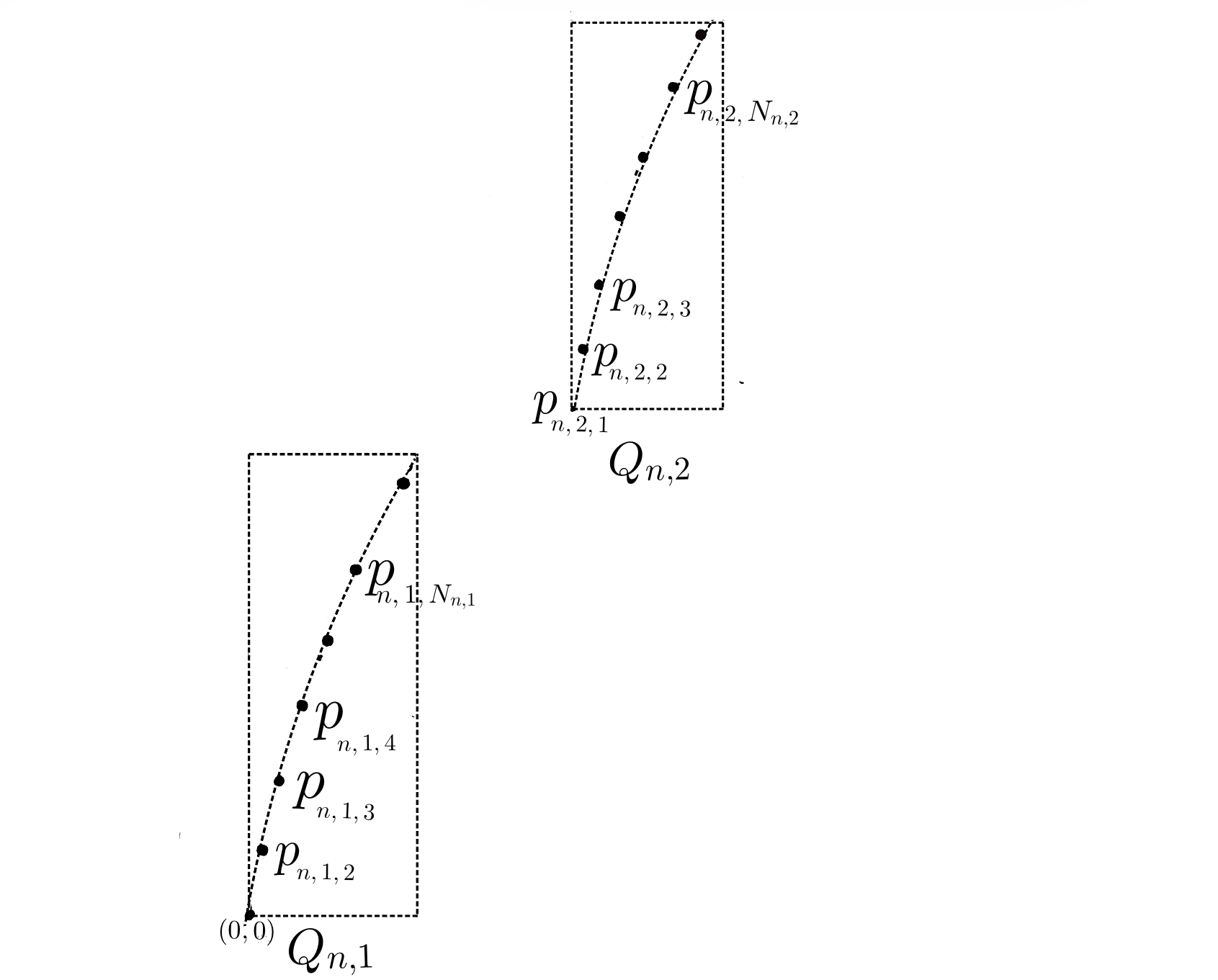}
    \caption{ }
    \label{figur}
\end{figure}
 We also have from \eqref{aliph}\begin{align} \label{lam}
 p_{n,j,k+l}-e^{-il  \theta_{n+1}}p_{n,j,k}= \alpha_{n+1}(1-e^{-i l\theta_{n+1}}) = p_{n,1,l+1}.\end{align}
 Equation \eqref{lam} implies that the point $p_{n,j,k+l}$ is the image of $p_{n,j,k}$ under rotation about the origin by an angle of $-l\theta_{n+1}$ and translation by $p_{n,1,l+1}$. We will use this fact to define the $\Gamma$-Besicovitch set later.
  \par
 If $p_{n,j,k+1} \in Q_{n,j}$ we define the rectangle $Q_{n,j,k}$  with sides parallel to the $x$ and the $y$ axes such that 
 \begin{equation} \label{ali}
     P_x(Q_{n,j,k})=[P_x(p_{n,j,k}), P_x(p_{n,j,k})+\delta_{n+1}]
\end{equation}  and 
\begin{align} \label{al}
    P_y(Q_{n,j,k})=[P_y(p_{n,j,k}), P_y(p_{n,j,k+1})].
\end{align}
 For two rectangles $Q_{n,j,k}$ and $Q_{n,j',k'}$ with $j\neq j'$, their $y$-projections are non-overlapping and their $x$-projections are disjoint, because that is the case for the larger rectangles $Q_{n,j}$ and $Q_{n,j'}$. However, we need to justify this property for rectangles $Q_{n,j,k}$ and $Q_{n,j,k'}$ which lie in the same larger rectangle $Q_{n,j}$. It will follow from Lemma \ref{le} that if $p_{n,j,k} \in Q_{n,j}$ then 
$$P_{y}(p_{n,j,k+1})>P_y(p_{n,j,k}) \text{ and }  P_{x}(p_{n,j,k+1})>P_x(p_{n,j,k})+ \delta_{n+1}.$$
Therefore the condition $p_{n,j,k+1} \in Q_{n,j}$, along with the above inequalities, implies that the rectangles $Q_{n,j,k}$ lie inside $Q_{n,j}$ and have disjoint $x$-projections and non-overlapping $y$-projections. \par 
\noindent We establish some technical properties of the points $p_{n,j,k}$ which lie inside the unit square.
 \begin{lemma} \label{le} Let $n \geq 1$ and $p_{n,j,k} \in [0,1]\times [0,1]$. Then for an absolute constant $c_1$, we have
    \begin{enumerate}[label=(\roman*)]
  \item $ |\Delta_{n+1}-\big(P_{y}(p_{n,j,k+1})-P_{y}(p_{n,j,k})\big)|\leq c_1\theta_{n+1}.$ 
  \item $|\delta_n \Delta_{n}^{-1}\Delta_{n+1}-\big(P_{x}(p_{n,j,k+1})-P_{x}(p_{n,j,k}) \big)| \leq 3c_1\theta_{n+1}.$
  \end{enumerate}
 \end{lemma}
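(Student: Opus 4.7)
The plan is as follows. First, I would combine equation \eqref{zo} with the expression \eqref{aliph} to derive the clean identity
\[
q_{n+1} - \bigl(p_{n,j,k+1} - p_{n,j,k}\bigr) = (1 - e^{-i\theta_{n+1}})\,p_{n,j,k},
\]
where $q_{n+1} = p_{n,1,2} = (1 - e^{-i\theta_{n+1}})\alpha_{n+1}$ has $y$-coordinate exactly $\Delta_{n+1}$ and $x$-coordinate that I will call $a$. This is what makes the argument work cleanly: the two $(1-e^{-i\theta_{n+1}})$ factors pull out, and the bracketed factor becomes $p_{n,j,k}$ by \eqref{aliph}. Since $|1 - e^{-i\theta_{n+1}}| = 2\sin(\theta_{n+1}/2) \le \theta_{n+1}$ and the hypothesis $p_{n,j,k} \in [0,1]^2$ gives $|p_{n,j,k}| \le \sqrt{2}$, the modulus of the right-hand side is at most $\sqrt{2}\,\theta_{n+1}$. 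Reading off the imaginary part yields (i) with $c_1 = \sqrt{2}$, and reading off the real part yields $|P_x(p_{n,j,k+1} - p_{n,j,k}) - a| \le \sqrt{2}\,\theta_{n+1}$.

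For (ii) it remains to estimate $|a - \delta_n\Delta_n^{-1}\Delta_{n+1}|$ and apply the triangle inequality. Here $q_{n+1} = (a, \Delta_{n+1})$ is the point of the arc $C_n$ at height $\Delta_{n+1}$, while $(\delta_n\Delta_{n+1}/\Delta_n, \Delta_{n+1})$ is the point of the chord from $(0,0)$ to $(\delta_n,\Delta_n)$ at the same height. A direct projection computation shows that the horizontal gap equals $\sqrt{\delta_n^2+\Delta_n^2}/\Delta_n \le \sqrt{2}$ times the perpendicular distance from $q_{n+1}$ to that chord, and the classical arc-chord formula expresses this perpendicular distance as $2R\sin(\theta_{n+1}/2)\sin((\Theta-\theta_{n+1})/2)$, where $R = |\alpha_{n+1}|$ and $\Theta$ is the total arc angle of $C_n$. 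Bounding each sine by its argument, then using the elementary $\Theta \le \pi\sin(\Theta/2)$ for $\Theta \in [0,\pi]$ together with the chord-length identity $2R\sin(\Theta/2) = \sqrt{\delta_n^2+\Delta_n^2} \le \sqrt{2}\,\Delta_n$, I expect to obtain $|a - \delta_n\Delta_n^{-1}\Delta_{n+1}| \le \pi\theta_{n+1}/2$. The triangle inequality then produces a total constant $\sqrt{2} + \pi/2 \le 3\sqrt{2}$, so (ii) holds with the same $c_1 = \sqrt{2}$.

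The only genuine moment of insight is spotting the identity in the first step; once it is in hand, the rest reduces to elementary trigonometric inequalities and no use of the hypothesis beyond $|p_{n,j,k}|\le\sqrt{2}$. The incidental facts needing quick verification from the construction in Section \ref{cavt} are that $C_n$ is a minor arc (so $\Theta \le \pi$), that $0 \le \theta_{n+1} \le \Theta$ (so $q_{n+1}$ lies between $(0,0)$ and $(\delta_n,\Delta_n)$ along $C_n$), and that the arc lies on the side of its chord opposite $\alpha_{n+1}$, all direct from the placement of $\alpha_{n+1}$ on $I_n$ below the $x$-axis together with $\theta_{n+1} = c\Delta_{n+1}\delta_n$ and $\Delta_n \le 1$.
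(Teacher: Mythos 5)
Your proof of (i) is exactly the paper's: both extract the identity $p_{n,1,2}-(p_{n,j,k+1}-p_{n,j,k})=(1-e^{-i\theta_{n+1}})p_{n,j,k}$ (the paper gets it from \eqref{lam} with $l=1$, you get it by combining \eqref{zo} with \eqref{aliph} — same computation) and then use $|1-e^{-i\theta_{n+1}}|\le\theta_{n+1}$, $|p_{n,j,k}|\le\sqrt 2$, together with $P_y(p_{n,1,2})=\Delta_{n+1}$. For (ii) you also make the same reduction as the paper, namely to estimating $|P_x(q_{n+1})-\delta_n\Delta_n^{-1}\Delta_{n+1}|$ and then applying the triangle inequality via \eqref{panda}; but the estimate itself is carried out by a genuinely different route. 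The paper argues discretely: it counts that there are at least $9$ points $p_{n,1,k}$ on $C_n$, uses concavity to see that the slopes of successive chords $p_{n,1,k+1}-p_{n,1,k}$ decrease, shows the last chord has slope below $\Delta_n/\delta_n$ while the first has slope above it, and combines this with \eqref{panda} to squeeze $P_x(p_{n,1,2})$ from both sides. You instead compute the horizontal deviation of $q_{n+1}$ from the chord of $C_n$ in closed form, via the perpendicular arc-to-chord distance $2R\sin(\theta_{n+1}/2)\sin((\Theta-\theta_{n+1})/2)$ and the chord-length identity $2R\sin(\Theta/2)=\sqrt{\delta_n^2+\Delta_n^2}$. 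Your version is more self-contained and quantitatively cleaner (it gives a two-sided bound in one stroke and avoids the point-counting and the slope comparison at the far corner), at the cost of invoking a bit of circle trigonometry; the side conditions you flag ($\Theta\le\pi$, $\theta_{n+1}\le\Theta$, the arc bulging away from $\alpha_{n+1}$) do hold — indeed $\sin(\Theta/2)=\delta_n/\sqrt{\delta_n^2+\Delta_n^2}\le 1/\sqrt2$, so $\Theta\le\pi/2$ — and your constants $c_1=\sqrt2$ and $\sqrt2+\pi/2\le 3\sqrt2$ check out. Either argument is acceptable.
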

\begin{proof} By \eqref{lam} we have $p_{n,1,2} =p_{n,j,k+1}-e^{-i\theta_{n+1}}p_{n,j,k}.$ Therefore
\begin{align} \label{bru} p_{n,1,2}-(p_{n,j,k+1}-p_{n,j,k})& =
 p_{n,j,k+1}-e^{-i\theta_{n+1}}p_{n,j,k}-(p_{n,j,k+1}-p_{n,j,k}) \nonumber \\ 
& = (1-e^{-i\theta_{n+1}})p_{n,j,k}.
\end{align} 
As $|1-e^{-i\theta_{n+1}}|\lesssim \theta_{n+1}$ and  $|p_{n,j,k}|\lesssim 1$, we have 
\begin{align} \label{panda}
    |p_{n,1,2}-(p_{n,j,k+1}-p_{n,j,k})|= |(1-e^{-i\theta_{n+1}})p_{n,j,k}|\leq c_1\theta_{n+1}.
\end{align}
Notice that $c_1$ is an absolute constant and it does not depend on $c$. Recall that $P_{y}(p_{n,1,2})= \Delta_{n+1}$. Therefore (i) holds. \par
Recall that the circular arc $C_n$ is in the first quadrant with center below the $x$-axis. So it is the graph of a concave function. Moreover, the line segment joining (0,0) and $(\delta_{n}, \Delta_{n})$ makes an angle greater than $\pi/4$ with the $x$-axis. Thus $C_n$ is also the graph of a monotonic function and 
$$0 < P_y(p_{n,1,k+1})-P_y(p_{n,1,k}) \leq P_{y}(p_{n,1,2}) =\Delta_{n+1}.$$ 
Thus the number of points $p_{n,1,k}$ on $C_n$ is at least $\Delta_{n}\Delta_{n+1}^{-1}-1$. By \eqref{P2} and \eqref{P3}, we have $\Delta_{n}\Delta_{n+1}^{-1} \geq c^{-1}\Delta_{n}\delta_{n}^{-(n+1)} \geq c^{-1}.$ We choose $c$ very small, in particular $c<1/10$. Thus there are at least 9 points $p_{n,1,k}$ on $C_n$. By concavity, the slope of the line joining the points $p_{n,1,k}$ and $p_{n,1,k+1}$ decreases as $k$ increases. The slope of the vector $p_{n,1,2}$ is greater than $\Delta_n \delta_{n}^{-1}$. If $p_{n,1,k+1}$ is the closest point to the corner $(\delta_n, \Delta_{n})$, then $ \dist(p_{n,1,k}, (\delta_{n}, \Delta_{n})) < \dist (p_{n,1,k}, (0,0))$. Thus the slope of $p_{n,1,k+1}-p_{n,1,k}$ will be less than $\Delta_{n}\delta_{n}^{-1}$. Therefore, using \eqref{panda} we have the following.
$$\frac{\Delta_{n+1}-c_1 \,\theta_{n+1}}{P_x(p_{n,1,2})+c_1\,\theta_{n+1}} = \frac{P_{y}(p_{n,1,2})-c_1\,\theta_{n+1}}{P_x(p_{n,1,2})+c_1\,\theta_{n+1}} \leq \frac{P_{y}(p_{n,1,k+1}-p_{n,1,k})}{P_{x}(p_{n,1,k+1}-p_{n,1,k})}\leq \frac{\Delta_{n}}{\delta_{n}}.$$ Thus,  
\begin{align} \label{di}
\delta_{n}\Delta_{n}^{-1}\Delta_{n+1}-P_x(p_{n,1,2}) \leq c_1\,\theta_{n+1}\delta_{n}\Delta_{n}^{-1}+c_1\,\theta_{n+1}\leq 2c_1\theta_{n+1}.
\end{align} Combining this with \eqref{panda} gives (ii).
\end{proof}
 \noindent It follows from Lemma \eqref{le} (i) that 
 $$P_{y}(p_{n,j,k+1})-P_y(p_{n,j,k})\geq \Delta_{n+1}-c_1\theta_{n+1}=\Delta_{n+1}-cc_1\Delta_{n+1}\delta_{n}=\Delta_{n+1}(1-cc_1\delta_{n}).$$ 
 From Lemma \ref{le} (ii), we also have 
 $$P_{x}(p_{n,j,k+1})-P_x(p_{n,j,k}) \geq \delta_{n}\Delta_{n}^{-1}\Delta_{n+1} -3c_1\theta_{n+1}> \delta_{n}\Delta_{n}^{-1}\Delta_{n+1}(1-3cc_1\Delta_{n}) .$$ 
 We choose $c$ to be small enough such that $1/2<1-3cc_1 <1-3cc_1\Delta_{n}$ and $c <1/10$. Therefore the $y$-projections of the rectangles $Q_{n,j,k}$ are non-overlapping. Using \eqref{P3} we have
 \begin{equation} \label{bux}
  P_{x}(p_{n,j,k+1})-P_x(p_{n,j,k}) \geq \delta_{n}\Delta_{n}^{-1}\Delta_{n+1}/2 \geq 3c\delta_{n}\Delta_{n}^{-1}\Delta_{n+1} \geq 3c\delta_{n}\Delta_{n+1} \geq 3\delta_{n+1}.   
 \end{equation}
 Recall that $|P_x(Q_{n,j,k})|=\delta_{n+1}$. Thus the $x$-projections of the $Q_{n,j,k}$ are disjoint. \par
 We define $N_{n,j}$ to be the number of rectangles $Q_{n,j,k}$ inside the bigger rectangle $Q_{n,j}$, i.e.
\begin{equation} \label{h}
    N_{n,j}={\#}\{k : Q_{n,j,k} \subset Q_{n,j} \} \text{ and } N_n=\min_{j} N_{n,j}.
\end{equation} 
  Define 
\begin{equation} \label{Izz}
    \mathcal{S}_{n+1}= \bigcup_{j} \bigcup_{k \leq N_n} Q_{n,j,k}.
\end{equation} We relabel the rectangles ${Q_{n,j,k}}$ as $Q_{n+1,i}$ in the order of distance from the origin. Notice that the rectangles $Q_{n+1,i}$ satisfy \eqref{Bla}, \eqref{A} and \eqref{B} with $n$ replaced by $n+1$. We continue the iterative process for all $n \in \mathbb{N}$ and define
\begin{equation} \label{pak}
  \Gamma= \bigcap_{n=1}^{\infty} \mathcal{S}_{n}.
 \end{equation}
 The next lemma gives an estimate on $N_{n}$ and on the $1$-dimensional Hausdorff measure of $\Gamma$.
 \begin{lemma} \label{ba} For all $n \geq 1$, we have the following.
 \begin{enumerate}[label=(\roman*)]
     \item $\Delta_{n}\Delta_{n+1}^{-1}(1-c_2\delta_{n-1})\leq N_{n} \leq \Delta_{n}\Delta_{n+1}^{-1}(1+c_2\delta_{n-1})$, where $c_2=4c(c_1+1)$.
     \item $\#\{j: Q_{n,j} \subset \mathcal{S}_{n}\} \gtrsim \Delta_{n}^{-1}$ and $ |P_{y}(\mathcal{S}_{n})| \gtrsim 1.$ 
 \end{enumerate} 
 \end{lemma}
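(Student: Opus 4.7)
The plan is to prove (i) by counting the arc-interpolated points $p_{n,j,k+1}$ that fall inside $Q_{n,j}$, using Lemma \ref{le} to control the individual steps, and then to obtain (ii) by iterating this count across the construction.

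For (i), set $H_{n,j} := |P_y(Q_{n,j})|$. For $n \geq 2$, $Q_{n,j}$ is a child $Q_{n-1,j',k'}$ from the previous step, so by \eqref{al} and Lemma \ref{le}(i),
\begin{equation*}
H_{n,j} \in [\Delta_n - c_1\theta_n,\, \Delta_n + c_1\theta_n] = \bigl[\Delta_n(1 - cc_1\delta_{n-1}),\, \Delta_n(1 + cc_1\delta_{n-1})\bigr],
\end{equation*}
while $H_{1,1} = 1 = \Delta_1$. By Lemma \ref{le}(i), every consecutive $y$-gap $P_y(p_{n,j,k+1}) - P_y(p_{n,j,k})$ lies in $[\Delta_{n+1}(1 - cc_1\delta_n),\, \Delta_{n+1}(1 + cc_1\delta_n)]$. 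Telescoping the containment $p_{n,j,N_{n,j}+1} \in Q_{n,j}$ in the $y$-direction gives the upper bound $N_{n,j} \leq H_{n,j}/(\Delta_{n+1}(1 - cc_1\delta_n))$, and the exit condition $p_{n,j,N_{n,j}+2} \notin Q_{n,j}$ gives the lower bound: whether the exit happens through the top edge (governed by Lemma \ref{le}(i)) or through the right edge (governed by Lemma \ref{le}(ii)), one obtains $N_{n,j} \geq \Delta_n\Delta_{n+1}^{-1}\bigl(1 - O(c\delta_{n-1})\bigr) - 1$. Plugging in the bounds on $H_{n,j}$, using $\delta_n \leq \delta_{n-1}$ and $\Delta_n \leq \delta_{n-1}$ (the latter from (P2)), and absorbing the additive $-1$ into the multiplicative error (possible because (P2) and (P3) make $\Delta_{n+1}$ much smaller than $\delta_{n-1}\Delta_n$), yields (i) with $c_2 = 4c(c_1+1)$ after elementary algebra.

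For (ii), let $M_n := \#\{j : Q_{n,j} \subset \mathcal{S}_n\}$. By \eqref{Izz} every $Q_{n,j}$ gives rise to $N_n$ children, hence $M_{n+1} = M_n N_n$, and with $M_1 = 1$ and (i),
\begin{equation*}
M_n \geq \prod_{m=1}^{n-1}\frac{\Delta_m}{\Delta_{m+1}}\bigl(1 - c_2\delta_{m-1}\bigr) = \Delta_n^{-1}\prod_{m=1}^{n-1}(1 - c_2\delta_{m-1}).
\end{equation*}
Because (P3) forces $\delta_m \leq c^{m-1}$, the sum $\sum_m \delta_m$ converges geometrically, so the infinite product is bounded below by an absolute positive constant (for $c$ small enough). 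This yields $M_n \gtrsim \Delta_n^{-1}$. Finally, the $y$-projections $P_y(Q_{n,j})$ are pairwise non-overlapping (as established in the discussion preceding the lemma), each of length $\gtrsim \Delta_n$, so $|P_y(\mathcal{S}_n)| = \sum_j H_{n,j} \gtrsim M_n \Delta_n \gtrsim 1$.

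The main technical point is the lower bound in (i): the count $N_{n,j}$ could in principle be cut off either by the top edge or the right edge of $Q_{n,j}$, and one must check that in both scenarios the leading term matches $\Delta_n\Delta_{n+1}^{-1}$ with only multiplicative error $c_2\delta_{n-1}$. This is ultimately saved by the rapid decay in (P2)--(P3), which makes the inevitable additive loss of $O(1)$ negligible against the multiplicative margin $c_2\delta_{n-1}\Delta_n\Delta_{n+1}^{-1}$.
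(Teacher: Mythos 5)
Your proposal is correct and follows essentially the same route as the paper: bound $N_{n,j}$ from above by the $y$-extent of $Q_{n,j}$ divided by the minimal $y$-step and from below by the minimum of the $y$- and $x$-constraints (top-edge versus right-edge exit) using Lemma \ref{le}, absorb the additive $-1$ via \eqref{P2}--\eqref{P3}, and then iterate $M_{n+1}=M_nN_n$ to get a convergent infinite product for (ii). The only (harmless) difference is that you justify $\sum_m\delta_m<\infty$ explicitly via $\delta_m\leq c^{m-1}$, which the paper leaves implicit.
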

  \begin{proof} By the construction of the rectangles, we have 
  $$\# \{k: Q_{n,j,k} \subset Q_{n,j}\}  \geq \min \displaystyle \Big\{\frac{|P_{y}(Q_{n,j})|}{\max_{k}\, (|P_{y}(Q_{n,j,k})|)}, \frac{|P_{x}(Q_{n,j})|}{\max_{k} \, (|P_{x}(p_{n,j,k+1}-p_{n,j,k})|)}\Big\}-1.$$
   From Lemma \ref{le} we have that $|P_{y}(Q_{n,j})-\Delta_{n}| \lesssim \theta_{n}$, for all $n$ and $j$. Using Lemma \ref{le} (i) and (ii), we have
 \begin{align*} 
     \# \{k : Q_{n,j,k} \subset Q_{n,j}\}  \geq \min \Big\{\frac{\Delta_{n}-c_1\,\theta_{n}}{\Delta_{n+1}+c_1\theta_{n+1}}, \frac{\delta_{n}}{\delta_{n}\Delta_{n}^{-1}\Delta_{n+1}+3c_1\,\theta_{n+1}}\Big\} -1.
 \end{align*}  Recall $\theta_{n}=c\Delta_{n}\delta_{n-1}$ and $\theta_{n+1}=c\Delta_{n+1}\delta_{n}$. Therefore,
 \begin{align*}
     \frac{\Delta_{n}-c_1\,\theta_{n}}{\Delta_{n+1}+c_1\theta_{n+1}}=\frac{\Delta_{n}\Delta_{n+1}^{-1}(1-cc_1\delta_{n-1})}{1+cc_1\delta_{n}}\, 
 \text{ and } 
    \,  \frac{\delta_{n}}{\delta_{n}\Delta_{n}^{-1}\Delta_{n+1}+3c_1\,\theta_{n+1}} = \frac{\Delta_{n}\Delta_{n+1}^{-1}}{1+3cc_1\Delta_n}.
 \end{align*}
 Now we use the inequality that $\frac{1}{1+x} > 1-x$ to obtain
 $$ \frac{(1-cc_1\delta_{n-1})} {1+cc_1\delta_{n}} \geq (1-cc_1\delta_{n-1})(1-cc_1\delta_{n}) \geq 1-2cc_1\delta_{n-1},$$ and 
 $$\frac{1}{1+3cc_1\Delta_n} \geq 1-3cc_1\Delta_{n}.$$
 Finally using \eqref{P2}  we have $\Delta_{n} \leq \delta_{n-1}$ and 
 \begin{align*}N_{n,j} \geq  \Delta_{n}\Delta_{n+1}^{-1}(1-3cc_1\delta_{n-1}) -1 & \geq \Delta_{n}\Delta_{n+1}^{-1}(1-3cc_1\delta_{n-1}-\Delta_{n}^{-1}\Delta_{n+1}) \\
 & \geq \Delta_{n}\Delta_{n+1}^{-1}(1-3cc_1\delta_{n-1}-c\Delta_{n}^{n}) \\
 & \geq \Delta_{n}\Delta_{n+1}^{-1}(1-3cc_1\delta_{n-1}-c\delta_{n-1}) \\
 & \geq \Delta_{n}\Delta_{n+1}^{-1}(1-4c(c_{1}+1)\delta_{n-1}).
 \end{align*}
 Since the above lower bound holds for all $j$, we have the left hand side of (i).
 Similarly, 
 \begin{align*}
     N_{n,j}  & \leq {\frac{|P_{y}(Q_{n,j})|}{\min_{k} (|P_{y}(Q_{n,j,k})|)}}   
     \leq \frac{\Delta_{n}+c_1\,\theta_{n}}{\Delta_{n+1}-c_1\theta_{n+1}} = \frac{\Delta_{n}\Delta_{n+1}^{-1}(1+cc_1\delta_{n-1})}{1-cc_1\delta_{n}}.
\end{align*}Now we use the inequality that $\frac{1}{1-x} < 1+2x$, for small $x$, to obtain 
$$ \frac{1+cc_1\delta_{n-1}}{1-cc_1\delta_{n}} \leq (1+cc_1\delta_{n-1})(1+2cc_1\delta_{n}) \leq 1+4cc_1\delta_{n-1}.$$ Therefore we have the right hand side of (i).\par 
By \eqref{Izz}, the number of rectangles obtained in $(n+1)^{st}$ step $\#\{(j,k) : Q_{n,j,k} \subset \mathcal{S}_{n+1}\}$ is $\#\{j : Q_{n,j} \subset \mathcal{S}_{n+1}\}N_{n}$. From (i), we have 
  \begin{equation*}
  \Delta_{n}\Delta_{n+1}^{-1}(1-c_2\delta_{n-1}) \#\{j: Q_{n,j} \subset \mathcal{S}_{n}\}   \leq \#\{(j,k) : Q_{n,j,k} \subset \mathcal{S}_{n+1}\}. \end{equation*} Rewriting this equation, 
  \begin{equation*}
   \#\{ j: Q_{n,j} \subset \mathcal{S}_{n}\}{\Delta_{n}}(1-c_2\delta_{n-1}) \leq \#\{(j,k): Q_{n,j,k} \subset \mathcal{S}_{n+1}\}{\Delta_{n+1}} .
  \end{equation*} Recall $\#\{Q_{1,1}\}{\Delta_{1}}=1$, thus we have 
  \begin{equation} \label{Ramen}
    \prod_{l=1}^{n-1}(1-c_2\delta_{l})  \leq  \#\{(j,k): Q_{n,j,k}\subset \mathcal{S}_{n+1}\}{\Delta_{n+1}} .  
  \end{equation}
The product $\prod_{l=1}^{n}(1-c_2\delta_{l}) $ converges to a positive number as $n \rightarrow \infty$. This is because $\sum_{l}\delta_{l} < \infty$.  Therefore 
$$\#\{j: Q_{n,j}\subset \mathcal{S}_{n}\} \gtrsim \Delta_{n}^{-1}, \text{ for all } n \in \mathbb{N}.$$
From Lemma \ref{le} (i) we have $ |P_y(Q_{n,j})| \simeq \Delta_{n}$. Thus 
  $$ |P_{y}(\mathcal{S}_{n})| =\sum_{j}|P_{y}(Q_{n,j})| \gtrsim \Delta_{n}^{-1}\Delta_{n} = 1.$$
  This completes the proof of (ii).
   \end{proof}
\noindent It follows from Lemma \ref{ba} (ii) that 
  \begin{equation}
      |P_{y}(\Gamma)| =\lim_{n \rightarrow \infty} |P_{y}(\mathcal{S}_{n})| \simeq 1. \end{equation} Thus $\Gamma$ has positive $1$-dimensional Hausdorff measure. Clearly $\Gamma$ is a subset of a graph of a monotonous continuous function and thus is 1-rectifiable. Moreover,
      \begin{equation}
      |P_{x}(\Gamma)| \leq \delta_n \#\{j: Q_{n,j}\subset \mathcal{S}_{n}\} \simeq \delta_n \Delta_{n}^{-1}.\end{equation} 
      Since $\delta_{n}\Delta_{n}^{-1} \rightarrow 0$ the domain is a Cantor set of measure 0 and therefore $
      \Gamma$ has vertical tangents almost everywhere. Thus we have shown property (i) and (iii) of Theorem \ref{main}. \par 
 We end this section with another estimate on $N_{n}$. 
 \begin{lemma}\label{Boow} For all $n \in \mathbb{N}$, we have $N_{n} < \theta_{n} \theta_{n+1}^{-1}$.  
     \end{lemma}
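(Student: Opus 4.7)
The plan is to combine the upper bound on $N_n$ from Lemma~\ref{ba}(i) with the smallness of $\delta_n$ relative to $\delta_{n-1}$ provided by \eqref{P3}.

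First, I rewrite the right-hand side in a convenient form. For $n\geq 2$ we have $\theta_n=c\Delta_n\delta_{n-1}$, so
\[
  \theta_n\theta_{n+1}^{-1} \;=\; \frac{\Delta_n\delta_{n-1}}{\Delta_{n+1}\delta_n},
\]
and adopting the convention $\delta_0:=1$ makes the same identity valid at $n=1$ (since $\theta_1\theta_2^{-1}=1/\Delta_2$). By Lemma~\ref{ba}(i), $N_n\leq \Delta_n\Delta_{n+1}^{-1}(1+c_2\delta_{n-1})$, so after cancelling $\Delta_n\Delta_{n+1}^{-1}$ the target inequality $N_n<\theta_n\theta_{n+1}^{-1}$ reduces to
\[
  (1+c_2\delta_{n-1})\,\delta_n \;<\; \delta_{n-1}.
\]

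Second, I would invoke \eqref{P3}, which yields $\delta_n\leq c\Delta_n\delta_{n-1}\leq c\delta_{n-1}$ using $\Delta_n\leq 1$. Substituting gives
\[
  (1+c_2\delta_{n-1})\,\delta_n \;\leq\; c(1+c_2)\,\delta_{n-1}.
\]
Recalling $c_2=4c(c_1+1)$, the product $c(1+c_2)=c+4c^2(c_1+1)$ is strictly less than $1$ as soon as $c$ is taken sufficiently small, which is compatible with the standing smallness hypothesis $c<1/10$ already imposed on $c$ throughout the construction. This yields the required strict inequality for $n\geq 2$.

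The main obstacle is the base case $n=1$: \eqref{P3} does not extend to the step $n=0$ (it would demand $\delta_1\leq c\delta_0$, contradicting $\delta_0=\delta_1=1$), so the reduction above does not apply as it stands. I expect a short separate verification is needed at $n=1$, relying on the explicit initial values $\delta_0=\delta_1=\Delta_1=1$ together with either a direct geometric analysis of the total central angle of $C_1$ (comparing it to $\theta_1=c$ up to $o(\theta_2)$) or a sharper form of the upper bound in Lemma~\ref{ba}(i) that exploits the exact identity $|P_y(Q_{1,1})|=1$. Once this base case is in hand, the inductive bound above completes the proof for all $n\in\mathbb{N}$.
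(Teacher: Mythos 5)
For $n\geq 2$ your argument is exactly the paper's: both reduce the claim, via the upper bound of Lemma~\ref{ba}(i) and the identities $\theta_n=c\Delta_n\delta_{n-1}$, $\theta_{n+1}=c\Delta_{n+1}\delta_n$, to the inequality $(1+c_2\delta_{n-1})\delta_n<\delta_{n-1}$, which follows from \eqref{P3} in the form $\delta_n\leq c\Delta_n\delta_{n-1}\leq c\delta_{n-1}$ together with the choice $c(1+c_2)<1$. So the core of your proof is correct and coincides with the paper's.

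Your hesitation about $n=1$ is well founded, and the paper does not resolve it either: its proof invokes $\delta_n\leq c\Delta_n\delta_{n-1}=\theta_n$ at $n=1$, where (with the convention $\delta_0=1$ implicit in Lemma~\ref{ba}(i)) this reads $1=\delta_1\leq c$, which is false. Moreover, the repair you sketch, namely showing that the total central angle $\Theta$ of $C_1$ is at most $\theta_1=c$ up to $o(\theta_2)$, cannot succeed, because in fact $\Theta>c$. Writing $R$ for the radius of $C_1$, the chord--angle relation gives $2R\sin(\Theta/2)=\sqrt{2}$ and $2R\sin(\theta_2/2)=\lVert q_2\rVert$; since $C_1$ is concave and lies above the chord $y=x$, one has $P_x(q_2)<\Delta_2$, hence $\lVert q_2\rVert<\sqrt{2}\,\Delta_2$ and $\sin(\Theta/2)>\sin(\theta_2/2)/\Delta_2\geq\sin(\theta_2/(2\Delta_2))=\sin(c/2)$, the last step by concavity of $\sin$ on $[0,\pi]$. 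Thus $\Theta>c$, and since $N_1$ is the number of positive multiples of $\theta_2$ not exceeding $\Theta$ while $\theta_1\theta_2^{-1}=\Delta_2^{-1}\in\mathbb{N}$, this forces $N_1\geq\theta_1\theta_2^{-1}$ (quantitatively $N_1\approx\Delta_2^{-1}(1+c/2)$). So the base case fails as literally stated, and closing it requires a modification of the construction or of the statement (for instance, capping the number of rectangles retained at the first step at $\theta_1\theta_2^{-1}-1$, which does not disturb the lower bounds of Lemma~\ref{ba}) rather than a sharper estimate. In short: your $n\geq 2$ argument is the paper's, you have correctly located a genuine defect at $n=1$ that the paper glosses over, and what is missing from your writeup is the recognition that no estimate alone will close that case.
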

     \begin{proof} From Lemma \ref{ba} (i) we have 
     $ N_{n} \leq \Delta_{n}\Delta_{n+1}^{-1}(1+c_2 \delta_{n-1}).$ 
 Recall $\theta_{n+1}=c\Delta_{n+1}\delta_{n}$, for all $n \geq 1$. Therefore
 $$N_n\theta_{n+1}\leq \Delta_{n}\Delta_{n+1}^{-1}(1+c_2\delta_{n-1})c\Delta_{n+1}\delta_{n}< c\Delta_{n}\delta_{n}(1+c_2\delta_{n-1}).$$
 We use $\delta_{n} \leq c\Delta_{n}\delta_{n-1} = \theta_{n}$ from \eqref{P3} to get
 $$N_n\theta_{n+1} <c\Delta_{n}\delta_{n}(1+c_2\delta_{n-1})\leq c\Delta_{n}\theta_{n}(1+c_2\delta_{n-1}) \leq c\theta_{n}(1+c_2).$$
 We have $ c(1+c_2)=c(1+4c(c_1+1)).$ We choose $c$ to be small enough such that $c(1+4c(c_1+1)) < 1.$ Thus $N_n \theta_{n+1}< \theta_{n}$ and this completes the proof. \end{proof}
\section{Construction of the $\Gamma$-Besicovitch Set} \label{canti}
\noindent 
We will construct a $1$-dimensional set which contains a translated copy of $e^{-i\theta} \Gamma$, for all $\theta \in [0,1]$. The translated copy of $e^{-i\theta} \Gamma$ will be denoted as  $\Gamma_{\theta}$. A union of finitely many rotated copies of this $1$-dimensional set will contain a copy of $\Gamma$ in every direction.  \par
First we define $\Gamma_{\theta}$, for $\theta \in [0,1]$ that are multiples of $\theta_{n}$ for some $n\in \mathbb{N}$. We will do this inductively. We start by defining $\Gamma_{\theta}$ for multiples of $\theta_1=c$ by  $$ \Gamma_{j \theta_1}=e^{-ij\theta_{1}}\Gamma, \text{ for } 0 \leq j\leq \theta_{1}^{-1}.$$
Assume that for some $n \in \mathbb{N}$, we have defined $\Gamma_{\theta}$, for angles $\theta \leq 1$ that are integer multiples of $\theta_{n}.$ 
We will now define $\Gamma_{\theta}$ for angles that are integer multiples of $\theta_{n+1}$. \par 
We start by defining $\Gamma_{\theta}$ for $ \theta < N_n$. Define 
\begin{equation} \label{q}
    \Gamma_{k\theta_{n+1}}: =e^{-ik\theta_{n+1}}\Gamma+ p_{n,1,k+1}  , \text{ for } 0\leq k < N_n.
\end{equation}
The above definition is motivated by \eqref{lam}. Since the point $p_{n,j,k+l} \in \Gamma$ is the same as $e^{-ik\theta_{n+1}}p_{n,j,l}+p_{n,1,k+1} \in \Gamma_{k\theta_{n+1}}$, there will be a lot of intersection between the $\theta_{n+1}$ neighborhoods of $\Gamma$ and $ \Gamma_{k\theta_{n+1}}$, for $k < N_{n}$. We will show that the Lebesgue measure of the $\theta_{n+1}$ neighborhood of the set $\bigcup_{k < N_{n}}\Gamma_{k\theta_{n+1}}$ is comparable to the Lebesgue measure of the $\theta_{n+1}$ neighborhood of $\Gamma$ which is around $\theta_{n+1}$. This will be a key property to show that the $\Gamma$-Besicovitch set we will construct indeed has dimension $1$.\par   
Now let $\theta =K\theta_{n+1}<\theta_{n}$. We have from Lemma \eqref{Boow} that $N_{n} <\theta_{n} \theta_{n+1}^{-1}$. Thus for some non-negative integer $q$ and for $k < {N_n}$, we have $K=(qN_n+k)$. Define 
\begin{equation} \label{qa}
    \Gamma_{K\theta_{n+1}}: =e^{-iqN_n\theta_{n+1}} \Gamma_{k\theta_{n+1}}.
\end{equation} We then have that
\begin{equation} 
 \bigcup\limits_{K < \theta_{n}\theta_{n+1}^{-1}}^{} \Gamma_{K\theta_{n+1}}  \subseteq  \bigcup_{qN_n<\theta_{n}\theta_{n+1}^{-1}}e^{-iqN_n\theta_{n+1}}\big(\bigcup_{k<N_n}\Gamma_{k\theta_{n+1}}\big). 
\end{equation}
By the induction hypothesis, we have already defined $\Gamma_{j\theta_{n}}$. Let the point $v_{j\theta}\in \mathbb{R}^2$ be such that 
\begin{equation} \label{yal}
    \Gamma_{j\theta_{n}} = e^{-ij\theta_{n}}\Gamma+v_{j\theta_{n}}.
\end{equation}
To define $\Gamma_{\theta}$ for mutiples of $\theta_{n+1}$ in $ [j\theta_n, (j+1)\theta_n)$, we rotate the set $\bigcup_{K < \theta_{n}\theta_{n+1}^{-1}}\Gamma_{K\theta_{n+1}}$ by $-j\theta_{n}$ and place this rotated set at $v_{j\theta_{n}}$.
\begin{equation} \label{akh}
    \bigcup_{K < \theta_{n}\theta_{n+1}^{-1}}\Gamma_{j\theta_{n}+K\theta_{n+1}}=e^{-ij\theta_n} \big( \bigcup_{K < \theta_{n}\theta_{n+1}^{-1}}\Gamma_{K\theta_{n+1}}\big)+v_{j{\theta_n}}.
\end{equation}
Thus we have
\begin{align} \label{qo}
    \Gamma_{j\theta_{n}+K\theta_{n+1}} :=  e^{-ij\theta_{n}} \Gamma_{K\theta_{n+1}} + v_{j\theta_{n}}, \text{ for all } K < \theta_n\theta_{n+1}^{-1}.
\end{align} 
\noindent Observe that if $K=0$, then the above definition agrees with \eqref{yal}. \par 
Recall from \eqref{pe} that $\theta_{n}\theta_{n+1}^{-1} \in \mathbb{N}$. Thus we have defined $\Gamma_{\theta}$ for all $\theta$ which are multiples of $\theta_{n+1}$.  Denote the points $v_{\theta}$ such that $\Gamma_{\theta}:=e^{-i\theta}\Gamma+v_{\theta}$. From \eqref{q}, \eqref{qa} and \eqref{qo} we have 
\begin{equation}\label{akhoo}
    v_{\theta}=
\begin{cases}
p_{n,1,k+1},&  \text{ for } \theta=k\theta_{n+1} \text{ and } k < N_{n}\\
e^{-iqN_{n}\theta_{n+1}} v_{k\theta_{n+1}}, & \text{ for } \theta=(qN_n+k)\theta_{n+1} <\theta_n\\
e^{-ij\theta_{n}}v_{\theta-j\theta_n}+v_{j\theta_{n}}, & \text{ for } \theta \in [j\theta_n, (j+1)\theta_n) \text{ and } \theta \theta_{n+1}^{-1} \in \mathbb{N}.
\end{cases}
\end{equation}
We continue the inductive process for all $n \in \mathbb{N}$. This defines $\Gamma_{\theta}$ for all $\theta$ that are multiples of $\theta_n$,  $n \in \mathbb{N}$. \par  
 We will now introduce rectangles $\mathcal{Q}_{n,j}$  centered at $p_{n,j}$ that contain $Q_{n,j}$. Let
 \begin{align} \label{ne}
     \mathcal{Q}_{n,j}=p_{n,j}+[-C\theta_{n},C\theta_{n}]\times [-C\Delta_n,C\Delta_n],
 \end{align}
 where $C>1$ is a large enough absolute constant to be specified later. We also let
 \begin{align} \label{new}
    T_{n}=\bigcup_{j} \mathcal{Q}_{n,j} \text{ and } T_{n,l\theta_{n}}=e^{-il\theta_n}T_{n}+v_{l\theta_n}.
 \end{align}
 We define
 \begin{equation} \label{Zwe}
     B = \bigcap_{n=1}^{\infty} \bigcup_{l \leq \theta_{n}^{-1}}T_{n,l\theta_{n}}.
 \end{equation}
We claim that $B$ is a $1$-dimensional $\Gamma$-Besicovitch set. \par
For a set $U \subset \mathbb{R}^2$, let $U(\delta)$ denote the open $\delta$-neighborhood of $U$. 
\begin{lemma} The set $B$ defined in \eqref{Zwe} is a 1-dimenionsal set.
    \end{lemma}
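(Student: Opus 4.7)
The plan is to establish $\dim B = 1$ in two steps: a lower bound and an upper bound.

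The lower bound is short. Taking $l=0$ in the union defining $\bigcup_{l\leq\theta_n^{-1}} T_{n,l\theta_n}$ gives $T_{n,0}=T_n$, and $\mathcal{S}_n\subseteq T_n$ since $Q_{n,j}\subseteq\mathcal{Q}_{n,j}$ whenever $C\geq 1$ (using $\delta_n\leq\theta_n$ from \eqref{P3}). Hence $\Gamma=\bigcap_n\mathcal{S}_n\subseteq\bigcap_n T_{n,0}\subseteq B$. Since $\mathcal{H}^1(\Gamma)\gtrsim 1>0$ by Lemma \ref{ba}(ii) and the ensuing discussion, $\dim B\geq\dim\Gamma\geq 1$.

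For the upper bound, set $U_n:=\bigcup_{l\leq\theta_n^{-1}} T_{n,l\theta_n}\supseteq B$. The central claim I would establish by induction on $n$ is the covering estimate: $U_n$ can be covered by $\lesssim\Delta_n^{-1}$ rectangles, each of diameter $\lesssim\Delta_n$. Granting this, for each $s>1$,
\[ \mathcal{H}^s_{\Delta_n}(B)\leq\mathcal{H}^s_{\Delta_n}(U_n)\lesssim\Delta_n^{-1}\cdot\Delta_n^{s}=\Delta_n^{s-1}. \]
For any $r,\varepsilon>0$, choosing $n$ with $\Delta_n\leq r$ and $\Delta_n^{s-1}<\varepsilon$ gives $\mathcal{H}^s_r(B)<\varepsilon$; letting $\varepsilon\to 0$ yields $\mathcal{H}^s(B)=0$ and $\dim B\leq s$ for every $s>1$, and sending $s\to 1^+$ finishes the upper bound.

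The inductive step has two parts. First, one shows $U_{n+1}\subseteq U_n$ up to the inflation built into $C$: using the decomposition $l\theta_{n+1}=j\theta_n+K\theta_{n+1}$ with $0\leq K<M_n:=\theta_n/\theta_{n+1}$ and formula \eqref{qo}, it suffices to show $T_{n+1,K\theta_{n+1}}\subseteq T_n$ for each $K<M_n$. For $K<N_n$ this follows from \eqref{aliph}--\eqref{lam}, which place the level-$(n+1)$ rectangles at the points $p_{n,j,K+k}$ inside the thickened $\mathcal{Q}_{n,j}$. For $N_n\leq K<M_n$, formula \eqref{qa} realizes $T_{n+1,K\theta_{n+1}}$ as a rotation about the origin by at most $\theta_n$ of a coherent piece, displacing points in a bounded region by $O(\theta_n)$, which is absorbed by the $C\theta_n$-thickening for $C$ large enough. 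Second, within each level-$n$ rectangle the coherent overlap \eqref{lam} shows that the $N_n$ rotations forming one bundle place level-$(n+1)$ rectangles at the $\sim 2N_n$ positions $p_{n,j,m}$, $m\in[1,2N_n-1]$, giving $\lesssim N_n\simeq\Delta_n/\Delta_{n+1}$ distinct level-$(n+1)$ rectangles per level-$n$ rectangle by Lemma \ref{ba}(i). Multiplying by the inductive count $\lesssim\Delta_n^{-1}$ produces the desired total $\lesssim\Delta_{n+1}^{-1}$.

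The main obstacle is the second sub-step, specifically controlling the non-coherent bundles from \eqref{qa} with $K\geq N_n$. A priori there are $M_n/N_n\simeq\delta_{n-1}/\delta_n$ such bundles per level-$n$ rectangle, and each could contribute its own set of distinct positions. Showing that these bundles aggregate into only $O(N_n)$ distinct rectangles per level-$n$ rectangle (rather than $O(M_n)$) requires exploiting the rigidity of rotations about $\alpha_{n+1}$ together with the divisibility property \eqref{pe}; this looks like the technical heart of the argument and is where the small constant $c$ and the superexponential decay \eqref{P2}--\eqref{P3} should enter decisively.
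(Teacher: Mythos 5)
Your lower bound is fine: $\Gamma=\Gamma_{0}\subset T_{n,0}$ for every $n$, so $\Gamma\subset B$ and $\dim_{H}B\geq 1$ follows from Lemma \ref{ba}(ii); the paper leaves this implicit (it is contained in Lemma \ref{lemm}). The problem is the central covering claim for the upper bound, namely that $U_{n+1}=\bigcup_{l\leq\theta_{n+1}^{-1}}T_{n+1,l\theta_{n+1}}$ can be covered by $\lesssim\Delta_{n+1}^{-1}$ sets of diameter $\lesssim\Delta_{n+1}$. This is false, and the mechanism you defer to as the ``technical heart'' --- that the $\simeq\delta_{n-1}/\delta_{n}$ non-coherent bundles from \eqref{qa} aggregate into only $O(N_{n})$ distinct positions per level-$n$ rectangle --- is exactly what does not happen. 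The map in \eqref{qa} is a rotation about the \emph{origin}, whereas the coherence \eqref{lam} comes from rotation about $\alpha_{n+1}$; these do not interact favourably, so the $q$-th bundle does not re-land on the grid $\{p_{n,j,m}\}$. Quantitatively, consecutive bundles differ by a rotation by $N_{n}\theta_{n+1}\simeq\Delta_{n}\delta_{n}$ about the origin, which displaces a point at distance $r$ from the origin by $\simeq r\Delta_{n}\delta_{n}$; for $r\simeq 1$, i.e.\ for most rectangles $Q_{n,j}$, this displacement exceeds $\Delta_{n+1}$ by a factor $\geq\Delta_{n}^{1-n}$, since $\Delta_{n+1}\leq c\delta_{n}^{n+1}\leq c\Delta_{n}^{n}\delta_{n}$ by \eqref{P2} and \eqref{P3}. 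Hence the points $e^{-iqN_{n}\theta_{n+1}}p_{n,j,m}$ with $j$ away from the origin are $\gtrsim\Delta_{n+1}$-separated in $q$ as well as in $(j,m)$, any ball of radius $O(\Delta_{n+1})$ meets $O(1)$ of them, and the covering number of $U_{n+1}$ at scale $\Delta_{n+1}$ is at least of order $(\delta_{n-1}/\delta_{n})\Delta_{n+1}^{-1}\geq(c\Delta_{n})^{-1}\Delta_{n+1}^{-1}\gg\Delta_{n+1}^{-1}$. Equivalently: your claim would force $|U_{n+1}|\lesssim\Delta_{n+1}$, whereas the union of the rotated bundles genuinely has measure of order $\Delta_{n+1}\Delta_{n}^{-1}$.

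The paper's proof accepts this loss instead of trying to cancel it. Coherence \eqref{lam} is used only within one bundle ($l<N_{n}$), giving $|\bigcup_{l<N_{n}}T'_{n+1,l\theta_{n+1}}|\lesssim\theta_{n+1}$; the remaining rotations and the $\theta_{n}^{-1}$ sectors are handled by brute multiplication, yielding $|B(\theta_{n+1})|\lesssim\Delta_{n+1}\Delta_{n}^{-1}$. This is far larger than $\theta_{n+1}$ and does \emph{not} give dimension $1$ at any single stage; the decisive step is the scale bookkeeping at the end: evaluating the Minkowski content at the intermediate scale $c\Delta_{n+1}^{1+\frac{1}{n+1}}\leq\theta_{n+1}$ (this inequality is where \eqref{P2} enters) gives only $\dim_{H}B\leq 1+\frac{2}{n+1}$, and dimension $1$ is obtained in the limit $n\to\infty$. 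A corrected version of your induction would have to carry the same loss --- e.g.\ $\lesssim\Delta_{n}^{-1}\Delta_{n+1}^{-1}$ covering sets of diameter $\Delta_{n+1}$, which still kills $\mathcal{H}^{s}$ for each fixed $s>1$ once $n$ is large, again by \eqref{P2} --- but establishing that count requires essentially the paper's overlap computation, so the covering formulation buys nothing over the measure-of-neighbourhoods argument.
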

    \begin{proof} It is clear from definition \eqref{Zwe} that $B \subset \bigcup_{l \leq \theta_{n+1}^{-1}}T_{n+1,l\theta_{n+1}}.$ We will estimate the measure of the set $\bigcup_{l \leq \theta_{n+1}^{-1}}T_{n+1,l\theta_{n+1}}(\theta_{n+1}). $ \par
 Let $\mathcal{Q'}_{n,j,k}$ be rectangles centered at $p_{n,j,k}$ with size $4C\theta_{n+1} \times 4C \Delta_{n+1}$ i.e.
 $$\mathcal{Q'}_{n,j,k}= p_{n,j,k}+[-2C\theta_{n+1},2C\theta_{n+1}]\times [-2C\Delta_{n+1},2C\Delta_{n+1}]. $$
 From definitions \eqref{ne} and \eqref{new}, we see that $T_{n+1}$ is a union of rectangles centered at points $p_{n,j,k}$ with size $2C\theta_{n+1} \times 2C \Delta_{n+1}$. Therefore
 \begin{equation*} 
  \displaystyle  T_{n+1}(\theta_{n+1})\subset \bigcup_{j}\bigcup_{k \leq N_{n}}\mathcal{Q'}_{n,j,k}.
 \end{equation*}
We let $$T'_{n+1}:=\bigcup_{j}\bigcup_{k \leq N_{n}}\mathcal{Q'}_{n,j,k} \,  \, \text{ and } \, \,  T'_{n+1,l\theta_{n+1}}:=e^{-il\theta_{n+1}}T'_{n+1}+v_{l\theta_{n+1}} .$$ Therefore 
\begin{equation} \label{Any}
  \displaystyle B(\theta_{n+1}) \subset \bigcup_{l \leq \theta_{n+1}^{-1}} T_{n+1,l\theta_{n+1}}(\theta_{n+1})\subset \bigcup_{l \leq \theta_{n+1}^{-1}} T'_{n+1,l\theta_{n+1}}.
 \end{equation}
 From \eqref{lam} we have 
\begin{align*}
    p_{n,j,k+l}=e^{-il\theta_{n+1}}p_{n,j,k}+p_{n,1,l+1}=e^{-i(l-1)\theta_{n+1}}p_{n,j,k+1}+p_{n,1,l}
\end{align*}
and $v_{l\theta_{n+1}}=p_{n,1,l+1}$ from definition \eqref{q}, for $l<N_{n}$. Thus for $l,\, k<N_n$, the center of the two rectangles $e^{-il\theta_{n+1}}\mathcal{Q'}_{n,j,k}+v_{l\theta_{n+1}}$ and $e^{-i(l-1)\theta_{n+1}}\mathcal{Q'}_{n,j,k+1}+v_{(l-1)\theta_{n+1}}$ is $p_{n,j,k+l}$ and the angle between them is $\theta_{n+1}$. For any such pair we have 
\begin{align*}\big|\big(e^{-i(l-1)\theta_{n+1}}\mathcal{Q'}_{n,j,k+1}+v_{(l-1)\theta_{n+1}}\big) \setminus \big(e^{-il\theta_{n+1}}\mathcal{Q'}_{n,j,k}+v_{l\theta_{n+1}}\big) \big| & \lesssim \theta_{n+1} (\max (\theta_{n+1},\Delta_{n+1}))^2 \\
& \leq  \Delta_{n+1}^2\theta_{n+1}.
\end{align*}
Thus we have from Lemma \ref{ba} (ii) that
\begin{align*}
   |T'_{n+1,(l-1)\theta_{n+1}} \setminus T'_{n+1,l\theta_{n+1}}| 
     \lesssim & \, \Delta_{n+1}^2 \theta_{n+1}(\#\{(j,k): Q_{n,j,k} \subset \mathcal{S}_{n+1}, \, k < N_n\}\\
   &  \hspace*{2.46 cm}+\Delta_{n+1} \theta_{n+1}(\#\{j: Q_{n,j, N_n} \subset \mathcal{S}_{n+1}\})  \\
     \lesssim & \, \Delta_{n+1}^2 \theta_{n+1}\#\{(j,k): Q_{n,j,k} \subset \mathcal{S}_{n+1}, k <N_n\} \\ 
     &  \hspace*{3.046 cm} + \Delta_{n+1} \theta_{n+1}(\#\{j: Q_{n,j} \subset \mathcal{S}_{n}\})  \\
     \lesssim & \,  \Delta_{n+1}\Delta_{n}^{-1}\theta_{n+1}.
\end{align*}
\noindent Summing over $l<N_{n}$ and using Lemma \ref{ba} again we get
\begin{align} \label{P}
  \Big|\bigcup_{l<N_{n}} T'_{n+1,l\theta_{n}}\Big| & \lesssim  |T'_{n+1}|+ \Delta_{n+1} \Delta_{n}^{-1}\theta_{n+1}N_{n} \nonumber \\
  & \lesssim \Delta_{n+1} \theta_{n+1}(\#\{(j,k): Q_{n,j,k} \subset \mathcal{S}_{n+1}\})+ \Delta_{n+1} \Delta_{n}^{-1}\theta_{n+1}N_{n} \nonumber \\
  & \lesssim \theta_{n+1}. 
\end{align}
Let $K <\theta_{n} \theta_{n+1}^{-1}$ and let $K=qN_n+l$ and $l <N_{n}.$ From \eqref{akhoo}, we have 
\begin{align*}
   T'_{n+1,K\theta_{n+1}}& =e^{-iK\theta_{n+1}}T'_{n+1}+v_{K\theta_{n+1}}  \\
   & = e^{-iqN_n \theta_{n+1}} e^{-il\theta_{n+1}} T'_{n+1}+e^{-iqN_n \theta_{n+1}} v_{l\theta_{n+1}} \\
& = e^{-iqN_n \theta_{n+1}} (e^{-il\theta_{n+1}} T'_{n+1}+v_{l\theta_{n+1}}) =e^{-iqN_n \theta_{n+1}} T'_{n+1,l\theta_{n+1}}.
\end{align*} Therefore 
\begin{align*}
 \bigcup_{K< \theta_{n}\theta_{n+1}^{-1}}T'_{n+1,K\theta_{n+1}}   \subset \bigcup_{qN_n\leq \theta_{n}\theta_{n+1}^{-1}} \bigcup_{l < N_n} T'_{n+1,l\theta_{n+1}}.
\end{align*}
Thus using the estimate \eqref{P}, we obtain
\begin{align} \label{Nip}
    \Big|\bigcup_{K < \theta_{n}\theta_{n+1}^{-1}}T'_{n+1,l\theta_{n+1}}\Big| \lesssim \theta_{n+1} N_n^{-1}\theta_{n}\theta_{n+1}^{-1}\leq  N_n^{-1}\theta_{n}.
\end{align} We will now sum over all $\theta \leq 1$ which are multiples of $\theta_{n+1}$. Let $\theta=j\theta_{n}+ K \theta_{n+1}$ such that $K\theta_{n+1} < \theta_n$. From \eqref{akhoo} we have
\begin{align*}
   T'_{n+1,\theta_{}}& =e^{-i\theta}T'_{n+1}+v_{\theta}  \\
   & = e^{-ij \theta_{n}} e^{-i(\theta-j\theta_{n})} T'_{n+1}+e^{-ij \theta_{n}}v_{\theta-j\theta_{n}} +v_{j\theta_{n}}. \\
& = e^{-ij \theta_{n}} (e^{-iK\theta_{n+1}} T'_{n+1}+v_{K\theta_{n+1}})+ v_{j\theta_{n}}=e^{-ij\theta_n } T'_{n+1,K\theta_{n+1}}+v_{j\theta_{n}}.
\end{align*} 
Therefore from Lemma \ref{ba} (i) we have that
\begin{align} \label{N}
    \Big|\bigcup_{l\leq \theta_{n+1}^{-1}}T'_{n+1,l\theta_{n+1}}\Big| \leq \Big|\bigcup_{l< \theta_{n}\theta_{n+1}^{-1}}T'_{n+1,l\theta_{n+1}}\Big|\theta_{n}^{-1}\lesssim N_n^{-1}\theta_{n}\theta_{n}^{-1}\lesssim \Delta_{n+1}\Delta_{n}^{-1}.
\end{align}
Thus from \eqref{Any} and \eqref{N} we have that 
$$ |B(\theta_{n+1})|\lesssim\Delta_{n+1}\Delta_{n}^{-1}.$$
From \eqref{P2}, we see that $\Delta_{n+1}^{\frac{1}{n+1}} \leq c^{\frac{1}{n+1}} \delta_{n}\leq  \delta_{n} \leq \Delta_n$. Thus $ B(c\Delta_{n+1}^{1+\frac{1}{n+1}}) \subset B(c\Delta_{n+1}\delta_{n}) =B(\theta_{n+1}) $ and we have
$$|B(c\Delta_{n+1}^{1+\frac{1}{n+1}})|   \lesssim \Delta_{n+1}\Delta_{n}^{-1} \lesssim \Delta_{n+1}^{1-\frac{1}{n+1}} \leq \big(\Delta_{n+1}^{1+\frac{1}{n+1}}\big)^{1-\frac{2}{n+1}}. $$
Therefore for all $m>n$
\begin{align*}
  |B(c\Delta_{m+1}^{1+\frac{1}{m+1}})| \lesssim    \big(\Delta_{m+1}^{1+\frac{1}{m+1}}\big)^{1-\frac{2}{m+1}} \lesssim \big(\Delta_{m+1}^{1+\frac{1}{m+1}}\big)^{1-\frac{2}{n+1}}.
\end{align*} Since $\Delta_{m+1} \rightarrow 0$ as $m \rightarrow \infty$ we conclude that 
$$ \dim_{H}B \leq 2-(1-\frac{2}{n+1})=1+\frac{2}{n+1}.$$
Taking $n \rightarrow \infty, $ we see that $\dim_{H} B=1$.
\end{proof}
It remains to prove that $B$ contains a rotated copy of $\Gamma$ in every direction. \\
Let $D=\{j\theta_n : \text{for some }n \in \mathbb{N} \text{ and }j\leq \theta_{n}^{-1}\}$, the dense subset of $[0,1]$ for which we have already defined $v_{\theta}$. We will now define $v_{\theta}$ for all $\theta \in [0,1]$ as the following left limit. 
\begin{equation} \label{zooni} 
v_\theta=\lim_{\substack{ \hspace*{.25 cm} \theta' \to \theta^{-} \\ \theta' \in D }} v_{\theta'}.
\end{equation}
It will follow from Lemma \ref{chu} below that the above limit exists. We define $\Gamma_{\theta}$ for all $\theta \in [0,1]$ as before 
$$\Gamma_{\theta}=e^{-i\theta}\Gamma+v_{\theta}.$$ 
Therefore we will have the following.
    \begin{equation} \label{zoni}
    \Gamma_{\theta}=e^{-ij\theta_{n}}(\Gamma_{\theta-j\theta_{n}})+v_{j\theta_{n}}, \text{ for all } \theta \in [j\theta_{n},(j+1)\theta_{n}).
\end{equation}
\begin{lemma} \label{chu} The limit in \eqref{zooni} exists. Moreover for $\theta < \theta _{n}$, we have 
\begin{equation} \label{eq}
|P_{x}(v_{\theta})| \lesssim \theta_{n} \text{ and } |P_{y}(v_{\theta})| \lesssim \Delta_{n}, 
\end{equation}
where the implicit constant is absolute and independent of $n$.
\end{lemma}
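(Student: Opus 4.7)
My plan is to prove the estimate \eqref{eq} for $\theta \in D$, use it to show the approximating sequence is Cauchy (establishing the limit), and then extend \eqref{eq} to all $\theta \in [0,1]$ by continuity. The engine of the argument is an explicit sum decomposition of $v_\theta$ obtained by iterating the recursion \eqref{akhoo}.

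By the integrality $\theta_m\theta_{m+1}^{-1} \in \mathbb{N}$ from \eqref{pe}, every $\theta \in D$ admits a unique mixed-radix expansion $\theta = \sum_{m \geq 1}j_m\theta_m$ with $0 \leq j_m < \theta_{m-1}/\theta_m$ for $m \geq 2$. I will show, by induction on the finest level $M$ of $\theta$, that
\[
v_\theta \;=\; \sum_{m=1}^{M} e^{-i\Phi_m}\, v_{j_m\theta_m}, \qquad \Phi_m := \sum_{l=1}^{m-1}j_l\theta_l.
\]
The pieces $v_{j_m\theta_m}$ are furnished by the first two lines of \eqref{akhoo} in the form $v_{j_m\theta_m} = e^{-i\psi_m} p_{m-1,1,k_m+1}$ for some $|\psi_m| < \theta_{m-1}$; the inductive step applies the third line of \eqref{akhoo} to separate the finest contribution $j_M\theta_M$ from the coarser remainder $\sum_{l \leq M-1}j_l\theta_l$, on which the induction hypothesis is invoked. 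Since $p_{m-1,1,k_m+1} \in Q_{m-1,1} = [0,\delta_{m-1}] \times [0,\Delta_{m-1}]$, and $\delta_{m-1} \leq \theta_{m-1}$ together with $\theta_{m-1}^2 \leq \Delta_{m-1}$ (both from \eqref{P3}), a direct rotation bound yields $|P_x(v_{j_m\theta_m})| \leq 2\theta_{m-1}$ and $|P_y(v_{j_m\theta_m})| \leq 2\Delta_{m-1}$.

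For $\theta < \theta_n$ the digits $j_m$ vanish for $m \leq n$, so the sum runs over $m \geq n+1$ and each phase satisfies $\Phi_m < \theta_n$. Using the geometric decays $\theta_{m+1}/\theta_m \leq c$ and $\Delta_{m+1}/\Delta_m \leq c$ (both consequences of \eqref{P2} and \eqref{P3}), the triangle inequality gives
\begin{align*}
|P_x(v_\theta)| &\leq 2\sum_{m \geq n+1}(\theta_{m-1} + \theta_n \Delta_{m-1}) \lesssim \theta_n, \\
|P_y(v_\theta)| &\leq 2\sum_{m \geq n+1}(\theta_n\theta_{m-1} + \Delta_{m-1}) \lesssim \Delta_n,
\end{align*}
where the second line uses $\theta_n^2 \leq \Delta_n$. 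For the limit, fix $\theta \in [0,1]$ and choose $J_n$ so that once $\theta', \theta'' \in D$ are sufficiently close to $\theta^-$, both lie in $[J_n\theta_n, (J_n+1)\theta_n)$. Splitting the sum decomposition at $m = n$ yields the identity $v_\sigma = e^{-iJ_n\theta_n}v_{\sigma - J_n\theta_n} + v_{J_n\theta_n}$ for any such $\sigma$, hence
\[
v_{\theta'} - v_{\theta''} \;=\; e^{-iJ_n\theta_n}\bigl(v_{\theta' - J_n\theta_n} - v_{\theta'' - J_n\theta_n}\bigr);
\]
since both offsets lie in $[0,\theta_n)$, the size estimate bounds the right side by $O(\theta_n + \Delta_n) = O(\Delta_n) \to 0$, proving Cauchy.

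The main obstacle is justifying the sum decomposition cleanly: the induction must reconcile the floor $\lfloor \theta/\theta_{M-1}\rfloor$ appearing in the third line of \eqref{akhoo} with the cumulative mixed-radix digits below level $M$, and handle separately the base case when $\theta < \theta_{M-1}$ (where only the second line of \eqref{akhoo} applies).
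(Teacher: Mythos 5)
Your proposal is correct and takes essentially the same route as the paper: your closed-form decomposition $v_\theta=\sum_m e^{-i\Phi_m}v_{j_m\theta_m}$ is exactly the telescoped form of the paper's increments $v_{\Theta_{m+1}}-v_{\Theta_m}$ along $\Theta_m=\lfloor\theta\theta_m^{-1}\rfloor\theta_m$, with the same per-term input ($v_{j_m\theta_m}$ is a small rotation of a point of $Q_{m-1,1}=[0,\delta_{m-1}]\times[0,\Delta_{m-1}]$) and the same use of the rapid decay of $\theta_m$ and $\Delta_m$ to sum the series and to get the Cauchy property for the left limit.
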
 
\begin{proof}
If $\theta=k\theta_{n+1} \text{ and } k < N_{n}$, then we have from \eqref{q} that $v_{\theta}=p_{n,1,k+1}$. By definition of $N_n$ (see \eqref{h}) this implies that $v_{\theta}$ lies in the rectangle $Q_{n,1}=[0, \delta_n]\times [0,\Delta_n]$.  Therefore $|v_{\theta}|\leq 2\Delta_{n}.$  \par 
If $\theta =K\theta_{n+1}<\theta_{n} $ and $K=qN_n+k$ with $k < N_n$, then we have from \eqref{qa} that 
$$ v_{\theta}=e^{-iqN_n\theta_{n+1}}(v_{k\theta_{n+1}}).$$ 
We are rotating a vector of length at most $2\Delta_{n}$ around origin by angle of magnitude at most $\theta_{n}$. Therefore we get
$$\|v_{\theta}-v_{k\theta_{n+1}}\| =\|v_{k\theta_{n+1}}\|\theta \leq2\Delta_{n} \theta_{n}$$ and 
\begin{equation}\label{cu}
    |P_{x}(v_{\theta})| \leq |P_{x}(v_{k\theta_{n+1}})| +  2\Delta_{n}\theta_{n} \leq \delta_{n}+ 2\Delta_{n}\theta_{n}
    \end{equation} and 
    \begin{equation} \label{cv}
    |P_{y}(v_{\theta})| \leq |P_{y}(v_{k\theta_{n+1}})|+  2\Delta_{n}\theta_{n} \leq \Delta_{n}+2\Delta_{n}\theta_{n}. \end{equation}
From \eqref{akhoo}, we have that if $\theta \in [j\theta_{n}, (j+1)\theta_{n})$ and is a multiple of $\theta_{n+1}$, then 
    $$v_{\theta}-v_{j\theta_n} =e^{-ij\theta_{n}}(v_{\theta-j\theta_{n}}).$$
We are rotating a vector of length at most $2\Delta_{n}$ around origin by angle of magnitude at most $1$. Therefore we get
\begin{equation} \label{tyuu}
    \|v_{\theta}-v_{j\theta_n}\|=\|v_{\theta-j\theta_{n}}\|j\theta_{n} \leq 2\Delta_{n}, \text{ for all } \theta \in [j\theta_{n}, (j+1)\theta_{n}) \text{ and } \theta \theta_{n+1}^{-1} \in \mathbb{N}.
\end{equation}
Now let $\theta \in [0,1]$. Let ${\Theta}_{m}=\lfloor \theta\theta_{m}^{-1} \rfloor \theta_{m}, $ where $\lfloor\cdot \rfloor$ denotes the greatest integer function. Then $\Theta_{m} $ denotes the greatest integer multiple of $\theta_m$ less than $\theta$. From \eqref{tyuu} we have 
$$\|v_{\Theta_{m+1}}-v_{\Theta_m}\|\leq 2\Delta_{m}, \text{ for all } m \in \mathbb{N}. $$ Therefore
\begin{align} \label{cho}
   \big\|v_{\Theta_{m+l}}-v_{\Theta_m} \big\|=\big\|\sum_{j=m}^{m+l-1}(v_{\Theta_{j+1}}-v_{\Theta_j})\big\|
     \leq \sum_{j=m}^{m+l-1}\big\|v_{\Theta_{j+1}}-v_{\Theta_j} \big\|\leq \sum_{j=m}^{m+l-1} 2 \Delta_{j}
     \lesssim \Delta_{m}.
\end{align} 
We have used that $\Delta_{m+1}\leq c\delta_{m}^{m+1} \leq \Delta_{m}^{m+1}$ which follows from \eqref{P2}. As $\Delta_{m} \rightarrow 0$, we conclude that 
$\{v_{\Theta_{m}}\}_{m \in \mathbb{N}}$ converges. Thus the limit $v_{\theta}$ as defined in \eqref{zooni} exists. \par
Now we again let $\theta < \theta_{n}$. We have from \eqref{cho} that $\|v_{\Theta_{n+1+m}}-v_{\Theta_{n+1}}\| \lesssim \Delta_{n+1}.$ Therefore we have
\begin{equation*}
    \|v_\theta-v_{\Theta_{n+1}}\| \lesssim \Delta_{n+1}.
\end{equation*}
Combining the above equation with \eqref{cu} and \eqref{cv}, we get for $\theta <\theta_{n}$
\begin{equation*} 
    |P_x(v_\theta)| \leq  |P_x(v_{\Theta_{n+1}})| +\|v_\theta-v_{\Theta_{n+1}}\| \lesssim \delta_n+2\Delta_n\theta_{n}+\Delta_{n+1} \lesssim \Delta_{n}\delta_{n-1}=\theta_{n}.
\end{equation*} and 
\begin{equation*} 
    |P_y(v_\theta)| \leq  |P_y(v_{ \Theta_{n+1}})| +\|v_\theta-v_{\Theta_{n+1}}\| \lesssim \Delta_n+2\Delta_n\theta_{n}+\Delta_{n+1} \lesssim \Delta_{n}.
\end{equation*} \end{proof}
We are now ready to prove that $B$ is a $\Gamma$-Besicovitch set.
\begin{lemma} \label{lemm}For  all $n \in \mathbb{N}$ and $\theta \in [0,1]$, we have 
\begin{equation} \label{lem}
    \Gamma_{\theta} \subset \bigcup_{i \leq \theta_{n}^{-1}} T_{n,i\theta_{n}}.
\end{equation}
\end{lemma}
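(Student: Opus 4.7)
The statement is purely a containment assertion about axis-aligned neighborhoods of rotated-translated copies of $\Gamma$, and the natural strategy is to reduce it, via the self-similar formula \eqref{zoni}, to the small-angle case $\theta\in[0,\theta_n)$, and then verify directly that each rotated-translated rectangle from the $n$-th stage of the construction fits inside the fattened rectangle $\mathcal{Q}_{n,j}$.

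First I would fix $\theta\in[0,1]$ and write $\theta=j\theta_n+\theta'$ with $0\le j\le\theta_n^{-1}$ and $0\le\theta'<\theta_n$. By \eqref{zoni},
\begin{equation*}
\Gamma_\theta \;=\; e^{-ij\theta_n}\Gamma_{\theta'} + v_{j\theta_n},
\end{equation*}
while by definition $T_{n,j\theta_n}=e^{-ij\theta_n}T_n+v_{j\theta_n}$. So it suffices to prove the small-angle statement
\begin{equation*}
\Gamma_{\theta'}\;\subset\; T_n\;=\;\bigcup_{j}\mathcal{Q}_{n,j}\qquad\text{for every }\theta'\in[0,\theta_n).
\end{equation*}
Since $\Gamma\subset\mathcal{S}_n=\bigcup_j Q_{n,j}$, the inclusion $\Gamma_{\theta'}\subset T_n$ will follow once I show that for each $j$,
\begin{equation*}
e^{-i\theta'}Q_{n,j}+v_{\theta'}\;\subset\;\mathcal{Q}_{n,j}\;=\;p_{n,j}+[-C\theta_n,C\theta_n]\times[-C\Delta_n,C\Delta_n].
\end{equation*}

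To verify the above inclusion I would exploit three ingredients: $Q_{n,j}$ has bottom-left corner $p_{n,j}$ and sides $\delta_n\times|P_y(Q_{n,j})|$ with $|P_y(Q_{n,j})|\le\Delta_n+c_1\theta_n$ (by Lemma \ref{le}(i)); the relation $\delta_n\le c\Delta_n\delta_{n-1}=\theta_n$ from \eqref{P3}; and the estimates $|P_x(v_{\theta'})|\lesssim\theta_n$, $|P_y(v_{\theta'})|\lesssim\Delta_n$ from Lemma \ref{chu}. The rotation of the center contributes $|p_{n,j}(e^{-i\theta'}-1)|\lesssim\theta'\le\theta_n$ in either coordinate. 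The shape distortion of $Q_{n,j}-p_{n,j}$ under rotation by $\theta'$ produces an axis-aligned bounding box of $x$-extent at most $\delta_n+|P_y(Q_{n,j})|\theta'\lesssim\theta_n+\Delta_n\theta_n\lesssim\theta_n$ and $y$-extent at most $|P_y(Q_{n,j})|+\delta_n\theta'\lesssim\Delta_n$. Adding $v_{\theta'}$ contributes a further shift of order $\theta_n$ in $x$ and $\Delta_n$ in $y$. All four contributions are therefore bounded by an absolute multiple of $\theta_n$ in the $x$-direction and by an absolute multiple of $\Delta_n$ in the $y$-direction, so choosing $C$ larger than the sum of these absolute constants closes the containment.

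The main obstacle is the bookkeeping in the $x$-direction, since $\mathcal{Q}_{n,j}$ is only thickened by $C\theta_n$ horizontally and every error term must be controlled against this tight budget; it is precisely the property \eqref{P3}, which forces $\delta_n\le\theta_n$, together with $\Delta_n\le 1$ (so $\Delta_n\theta'\le\theta_n$), and the sharp horizontal bound $|P_x(v_{\theta'})|\lesssim\theta_n$ from Lemma \ref{chu}, that make the four error contributions all $O(\theta_n)$. Once this verification is made, the reduction via \eqref{zoni} immediately yields $\Gamma_\theta\subset T_{n,j\theta_n}\subset\bigcup_{i\le\theta_n^{-1}}T_{n,i\theta_n}$, completing the lemma.
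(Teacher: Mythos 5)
Your proposal is correct and follows essentially the same route as the paper: reduce to $\theta<\theta_n$ via \eqref{zoni} and the identity $T_{n,j\theta_n}=e^{-ij\theta_n}T_n+v_{j\theta_n}$, then control the rotated rectangle $e^{-i\theta}Q_{n,j}$ coordinate-wise using $\delta_n\le\theta_n$ from \eqref{P3} and the bounds $|P_x(v_\theta)|\lesssim\theta_n$, $|P_y(v_\theta)|\lesssim\Delta_n$ from Lemma \ref{chu}, choosing $C$ to absorb the absolute constants. The only difference is cosmetic (you do the reduction first, the paper does the small-angle case first), so nothing further is needed.
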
 
\begin{proof} For any $\theta < \theta_n$ and $p_{n,j} \in \Gamma \subset [0,1]^2$, we have
$$\|e^{-i\theta}p_{n,j}-p_{n,j}\| \leq \|p_{n,j}\|\theta \lesssim \theta_n.$$
 The rectangle $Q_{n,j}$ has sides parallel to the coordinate axes. Therefore, $$|P_x(e^{-i\theta}Q_{n,j})| \lesssim |P_{x}(Q_{n,j})|+\Delta_n\theta \lesssim \delta_n+\Delta_{n}.\theta_n \lesssim \theta_n $$ and $$|P_y(e^{-i\theta}Q_{n,j})| \leq |P_{y}(Q_{n,j})|+\Delta_n\theta \leq 2\Delta_{n}+2\delta_n\theta_n \lesssim \Delta_{n}.$$
 Let $z \in \Gamma_{\theta}$. Since $\Gamma_{\theta} \subset e^{-i\theta}(\bigcup\hspace{.03 cm}Q_{n,j}) + v_{\theta}$, the point $z -v_{\theta}$ lies in the rectangle $e^{-i\theta}Q_{n,j}$ for some $j$. Thus from the above discussion and Lemma \ref{chu}, we get 
\begin{equation*}
    |P_x(z-p_{n,j})| \leq |P_x(z-v_{\theta}-e^{-i\theta}(p_{n,j}))| +|P_x(e^{-i\theta}(p_{n,j})-p_{n,j})|+|P_x(v_{\theta})|\lesssim \theta_n. \end{equation*}
    and
  \begin{equation*}
    |P_y(z-p_{n,j})| \leq |P_y(z-v_{\theta}-e^{-i\theta}(p_{n,j}))| +|P_y(e^{-i\theta}(p_{n,j})-p_{n,j})|+|P_y(v_{\theta})|\lesssim \Delta_n. \end{equation*}
We take $C$ in \eqref{ne} large enough such that $\Gamma_{\theta} \subset \bigcup \mathcal{Q}_{n,j}=T_{n}$, for all $\theta < \theta_n $. Now let $\theta \in [j\theta_{n},(j+1)\theta_{n}))$ then by \eqref{zoni}, we obtain 
\begin{equation}
  \Gamma_{\theta}=e^{-ij\theta_{n}}\Gamma_{\theta-j\theta_{n}}+v_{j\theta_n} \subset e^{-ij\theta_{n}}T_{n}+v_{j\theta_n}= T_{n,j\theta_{n}}.
\end{equation} This completes the proof.
\end{proof}
\noindent Thus $\Gamma_{\theta} \subset B$ for every $\theta \in [0,1]$ and so we have proved Theorem \ref{main} (ii).

\section{Dimension of the Domain of $\Gamma$}
So far we have proved that if $\{\delta_{n}\}_{n \in \mathbb{N}}$ and $\{\Delta_{n}\}_{n \in \mathbb{N}}$ satisfy \eqref{P1}-\eqref{P4}, and if $\Gamma$ and $B$ are constructed as in Section \ref{cavt} and Section \ref{canti} respectively, then $\Gamma$ has property (i), (ii) and (iii) of Theorem \ref{main}. It remains to prove that the dimension of the $x$-projection of $\Gamma$ can take any value $s \in [0,1]$. \par 
 Let $s \in [0,1]$ and let $\{\delta_{n}\}_{n \in \mathbb{N}}$ and $\{\Delta_{n}\}_{n \in \mathbb{N}}$ satisfy conditions \eqref{P1}, \eqref{P2}, \eqref{P4} and 
\begin{align} \label{n}
   \delta_{n} = c\Delta_{n}^{\frac{1}{s_{n}}},
\end{align}
where $$s_{n}= \begin{cases} 
      \frac{1}{n}, & \text{ if } s=0 \\
      \frac{sn}{n+1}, &  \text{ if }0 < s \leq 1.
      \end{cases}$$
   We will show that \eqref{P3} holds and the corresponding $\Gamma$ we constructed in Section \ref{cavt} satisfies
\begin{align} \label{na}
    \dim (P_{x}(\Gamma))=s.
\end{align}
     From \eqref{P2}\,($\Delta_{n+1}^{\frac{1}{{n+1}}}\leq c^{\frac{1}{{n+1}}}\delta_{n}$) and  \eqref{n} we have
$$ \delta_{n+1}=c\Delta_{n+1}^{\frac{1}{s_{n+1}}} =c\Delta_{n+1} \Delta_{n+1}^{\frac{1}{s_{n+1}}-1}  \leq c\Delta_{n+1} (c\delta_{n}^{n+1})^{\frac{1}{s_{n+1}}-1}.$$
If $s=0$ then $$(n+1)(s_{n+1}^{-1}-1)=(n+1)n \geq 1.$$ If $s>0$ then $$(n+1)(s_{n+1}^{-1}-1)=(n+1)(s^{-1}(n+2)/(n+1)-1) \geq 1.$$ Hence in both cases $$\delta_{n+1} \leq c\Delta_{n+1}(c\delta_{n}^{n+1})^{\frac{1}{s_{n+1}}-1} \leq c^{1+\frac{1}{n+1}}\Delta_{n+1}\delta_{n} \leq c\Delta_{n+1}\delta_{n}.$$ So indeed \eqref{P3} holds. 
\begin{proof}[Proof of \eqref{na}]
We first show that $\dim (P_{x}(\Gamma)) \leq s$. By construction, for all $p\in \mathbb{N}$, $\bigcup_{j}P_{x}(Q_{p,j})$ is a cover of $P_{x}(\Gamma)$.  \par 
Assume $s=0$ and let $s' >0$. Using Lemma \ref{ba} (ii) and \eqref{n} we have
\begin{align*}
    \sum_{j} |P_{x}(Q_{p,j})|^{s'} = \delta_{p}^{s'}\#\{j: Q_{p,j}\subset \mathcal{S}_p\} \lesssim \delta_{p}^{s'} \Delta_{p}^{-1} \lesssim \delta_{p}^{s'-\frac{1}{p}}.
\end{align*} Hence for any $p \in \mathbb{N}$ such that $1/p< s'$, we have 
$$ \sum_{j} |P_{x}(Q_{p,j})|^{s'} \lesssim \delta_{p}^{s'-\frac{1}{p}} <1.$$
As $p \rightarrow \infty$, $\delta_{p} \rightarrow 0$. Therefore $\dim (P_x(\Gamma)) \leq s'$. Since $0<s'$ was arbitrary we conclude that $\dim (P_x(\Gamma)) =0.$ \par
Now assume that $s>0$. We again use Lemma \ref{ba} (ii) and \eqref{n} to get \begin{align*}
    \sum_{j} |P_{x}(Q_{p,j})|^{s}= \delta_{p}^{s}\#\{j: Q_{p,j} \subset \mathcal{S}_p\} \lesssim \delta_{p}^{s} \Delta_{p}^{-1} \lesssim \delta_{p}^{s}\delta_{p}^{-s_{p}}= \delta_{p}^{\frac{s}{p+1}}\leq 1.
\end{align*} Therefore $\dim (P_{x}(\Gamma)) \leq s$. \par
It remains to prove the opposite inequality for $s>0$. Fix $m \in \mathbb{N}$, and let $\bigcup_{i}I_i$ be a cover of $P_x(\Gamma)$ by open intervals $I_i$ such that $|I_i| < \delta_{m}$, for all $i$. We can assume it is a finite cover since $P_x(\Gamma)$ is compact. Let $p \in \mathbb{N}$ be such that $\delta_{p}< \min\limits_{i} |I_i|$.\par 
Now fix an arbitrary $i$ and let $n \in \mathbb{N}$ be such that $\delta_{n+1} \leq  |I_{i}| <  \delta_n$. We will show that 
\begin{align} \label{esta}
 |I_i|^{s_m} & \gtrsim \sum\limits_{ \{(j,k):\,P_{x}(Q_{n,j,k})\cap I_i \neq \emptyset \}} |P_{x}(Q_{n,j,k})|^{s_{n+1}}.
\end{align}
Recall from \eqref{bux} that
$ 3\delta_{n} \leq P_{x}(p_{n,j+1})- P_{x}(p_{n,j})$. Since the distance between the left endpoints of $P_x(Q_{n,j})$ and $P_x(Q_{n,j+1})$ is greater than $3\delta_{n}=3|P_x(Q_{n,j})|$, we can assume that $I_i$ intersects  $P_x(Q_{n,j})$ for only one rectangle $Q_{n,j}$. Let 
\begin{equation}
    \lambda = \displaystyle {\frac{|I_i|}{\delta_n \Delta_{n}^{-1}\Delta_{n+1}}} .
\end{equation} We see from \eqref{bux} that $P_x(p_{n,j,k+1})-P_x(p_{n,j,k}) \gtrsim \delta_{n} \Delta_{n}^{-1}\Delta_{n+1}$. Therefore we have
\begin{align*}
    \#\{Q_{n,j,k} : P_{x}(Q_{n,j,k})\cap I_i \neq \emptyset \} \lesssim \lambda+1.
\end{align*} We look at two cases. \\
\textbf{Case 1}: $\lambda < 1$. \par 
\hspace*{.34 cm} In this case $I_i$ intersects $P_{x}(Q_{n,j,k})$ for at most constant many rectangles $Q_{n,j,k}$. Since $\delta_{n+1} \leq |I_i| < \delta_m$ we have $m < n+1$. Therefore $s_{m} <s_{n+1}$ and 
$$|I_i|^{s_m} > |I_i|^{s_{n+1}} \geq \delta_{n+1}^{s_{n+1}} \gtrsim \sum\limits_{ \{(j,k):\,P_{x}(Q_{n,j,k})\cap I_i \neq \emptyset \}} |P_{x}(Q_{n,j,k})|^{s_{n+1}}. $$
So \eqref{esta} holds. \\
\textbf{Case 2}: $\lambda \geq 1$. \par 
To conclude \eqref{esta} we need to verify that 
\begin{align*}
   |I_i|^{s_{m}}=(\lambda \delta_n \Delta_{n}^{-1}\Delta_{n+1})^{s_{m}} & \gtrsim (\lambda+1) \delta_{n+1}^{s_{n+1}} \simeq \lambda \delta_{n+1}^{s_{n+1}}.
\end{align*} Rewriting the above inequality we get
\begin{align} \label{Tes}
    ( \delta_{n}\Delta_{n}^{-1}\Delta_{n+1})^{s_m} \gtrsim \lambda^{1-s_m} \delta_{n+1}^{s_{n+1}}.
\end{align} Using $s_m \leq s_{n}$ and \eqref{n} we have that
$$\delta_{n}^{s_m}\geq \delta_{n}^{s_n}\simeq \Delta_{n} \simeq \Delta_{n}\Delta_{n+1}^{-1}\delta_{n+1}^{s_{n+1}}.$$
Thus 
$$ (\delta_{n}\Delta_{n}^{-1}\Delta_{n+1})^{s_m} \gtrsim (\Delta_{n}\Delta_{n+1}^{-1})^{1-s_m}\delta_{n+1}^{s_{n+1}}. $$ 
Since $|I_i| < \delta_n$ we have $\lambda < \Delta_{n} \Delta_{n+1}^{-1}$. As $s_m <1$, we have $ \lambda^{1-s_{m}} < (\Delta_{n} \Delta_{n+1}^{-1})^{1-s_{m}}$. Therefore we obtain \eqref{Tes} from the above inequality. Thus \eqref{esta} holds in Case 2. \par
Since $\delta_{p}<|I_i| < \delta_{n}$ we have that $p \geq  n+1$. By the definition \eqref{h} of $N_{n}$ and Lemma 3 (i) we have
\begin{align*}
    \#\{Q_{p,l}: Q_{p,l} \subset Q_{n,j,k}\} \leq \prod_{q=n+1}^{p-1}\Delta_{q}\Delta_{q+1}^{-1}(1+c_2\delta_{q-1})=\Delta_{n+1}\Delta_{p}^{-1}\prod_{q=n+1}^{p-1}(1+c_2\delta_{q-1}).
\end{align*} 
As noted earlier below \eqref{Ramen}, the product $\prod_{q=n+1}^{p}(1+c_2\delta_{q-1}) $ converges to an absolute constant. Therefore $\prod_{q=n+1}^{p}(1+c_2\delta_{q-1}) \simeq 1$ and 
\begin{align} \label{establ} 
\sum\limits_{\{l : Q_{p,l}\subset Q_{n,j,k}\}} |P_{x}(Q_{p,l})|^{s_{p}} \lesssim \delta_{p}^{s_{p}}\Delta_{n+1}\Delta_{p}^{-1} \simeq \Delta_{n+1} \simeq \delta_{n+1}^{s_{n+1}} =
 |P_{x}(Q_{n,j,k})|^{s_{n+1}}.
\end{align} 
 Hence we obtain from \eqref{establ} and \eqref{esta} that 
 \begin{align*}
   |I_{i}|^{s_{m}} \gtrsim \sum\limits_{ \{(j,k):\,P_{x}(Q_{n,j,k})\cap I_i \neq \emptyset \}} |P_{x}(Q_{n,j,k})|^{s_{n+1}} & \gtrsim \sum\limits_{ \{(j,k):\,P_{x}(Q_{n,j,k})\cap I_i \neq \emptyset \}} \sum\limits_{ \{l:\,Q_{p,l} \subset Q_{n,j,k}\}}  |P_{x}(Q_{p,l})|^{s_{p}} \\
   & \geq \sum\limits_{ \{l:\,P_{x}(Q_{p,l})\cap E_i \neq \emptyset \}} |P_{x}(Q_{p,l})|^{s_{p}}.
 \end{align*} 
 Summing over all $i$ we get 
\begin{align}
    \sum_{i}|I_i|^{s_{m}} \gtrsim \sum_{i} \sum\limits_{ \{l:\,P_{x}(Q_{p,l})\cap I_i \neq \emptyset \}}  |P_{x}(Q_{p,l})|^{s_{p}} \geq  \sum_{l } |P_{x}(Q_{p,l})|^{s_{p}} \gtrsim \delta_{p}^{s_{p}} \Delta_{p}^{-1} \simeq 1.
\end{align} This proves $\dim (P_{x}(\Gamma)) \geq s_m$. Taking $m \rightarrow \infty$ we get \eqref{na}.
\end{proof} 
\section{Acknowledgement}
I would like to thank Marianna Cs\"ornyei for asking this question, for her unparalleled guidance and patience, especially when I was writing this paper. I would also like to thank Korn\'elia H\'era for helpful discussions and suggestions.
\bibliographystyle{ieeetr}
\bibliography{references}
\nocite{CHL}
\nocite{Fa}
\nocite{HKM}
\nocite{Ma95}
\nocite{HL}
\end{document}